\documentclass[reqno]{amsart}
\usepackage{fancyhdr}
\usepackage[dvips]{color}
\usepackage{mathrsfs}
\usepackage{amsfonts}
\usepackage{graphicx}
\usepackage{xcolor}
\usepackage{amscd}
\usepackage{amsmath}
\usepackage{amssymb}
\usepackage{latexsym}
\usepackage{hyperref}
\usepackage[all]{xy}
\setcounter{MaxMatrixCols}{30}
\theoremstyle{plain}

\newtheorem{corollary}{\bf Corollary}
\newtheorem{definition}{\bf Definition}
\newtheorem{example}{\bf Example}
\newtheorem{lemma}{\bf Lemma}

\newtheorem{proposition}{\bf Proposition}
\newtheorem{remark}{Remark}

\newtheorem{theorem}{\bf Theorem}
{}
\numberwithin{equation}{section}
\newcommand\tr{\mathrm{tr}}
\newcommand\dv{\mathrm{div}}

\begin{document}

\title[Classification of base of warped product almost Ricci solitons]{Classification of base of warped product almost Ricci solitons}

\author{Jos\'e N. V. Gomes}
\address{(Jos\'e N. V. Gomes) Current: Department of Mathematics, Lehigh University, Bethlehem, PA 18015, USA; Permanent: Departamento de Matem\'atica, Universidade Federal do Amazonas, 69080-900, Manaus-AM-BR}
\email{jnvgomes@pq.cnpq.br, jov217@lehigh.edu}
\urladdr{https://www1.lehigh.edu; http://www.ufam.edu.br}

\author{Manoel V. M. Neto}
\address{(Manoel V. M. Neto) Departamento de Matem\'atica, Universidade Federal do Piau\'i, 64049-550, Teresina-PI-BR}
\email{mvieira@ufpi.edu.br}
\urladdr{http://www.ufpi.edu.br}

\keywords{Conformally flat; Einstein type metrics, Ricci-Hessian type manifolds}

\subjclass[2010]{Primary 53C15, 53A30; Secondary 53C24, 53C12}

\begin{abstract}
In this paper we study a Ricci-Hessian type manifold $(\Bbb{M},g,\varphi,f,\lambda)$ which is closely related to the construction of almost Ricci soliton realized as a warped product. We classify certain classes of the Ricci-Hessian type manifolds and derive some implications for almost Ricci solitons and generalized $m$--quasi-Einstein manifolds. We consider two complementary cases: $\nabla f$ and $\nabla\varphi$ are linearly independent in $C^\infty(\Bbb{M})$--module $\mathfrak{X}(\Bbb{M})$; and $\nabla f=h\nabla\varphi$ for a smooth function $h$ on $\Bbb{M}$. In the first case we show that the vector field $\nabla\lambda$ belongs to the $C^\infty(\Bbb{M})$--module generated by $\nabla f$ and $\nabla\varphi$, while in the second case, under additional hypothesis, the manifold is, around any regular point of $f$, locally isometric to a warped product.
\end{abstract}
\maketitle

\section{Introduction}

The main aim of this paper is to classify the base of almost Ricci solitons realized as warped products. This latter was introduced in \cite{BO} and appear in a natural manner in Riemannian geometry and their applications abound. Almost Ricci solitons were introduced in \cite{prrs}, currently its arise from the Ricci-Bourguignon flow as discovered recently by Catino et al. in \cite{Catino2}. They were motivated by traditional Ricci solitons which correspond to selfsimilar solutions of Ricci flow and often arise as limits of dilations of singularities in the Ricci flow, cf. Hamilton \cite{hamilton2}. These two classes of manifolds are generalizations of Einstein manifolds and has been attracting a lot of attention in the mathematical community. Besides them, new classes of Einstein type manifolds arise in several different frameworks. For instance, Case-Shu-Wei \cite{case1} introduced the concept of $m$--quasi-Einstein manifold, i.e., a Riemannian manifold whose modified Bakry-Emery Ricci tensor is a constant multiple of the metric tensor. We wish to remind the reader that this concept originated from the study of Einstein warped product manifolds, cf. Besse \cite{besse} and Kim-Kim \cite{kim}. Indeed, a necessary condition for a warped product to be an Einstein manifold is its base to be an $m$--quasi-Einstein manifold. Moreover, it was showed in \cite{kim} that a compact Einstein warped product manifold with non-constant warping function does not exist if the scalar curvature is non-positive. Recently, Barros-Batista-Ribeiro \cite{BBR} provided some volume estimates for Einstein warped product manifolds similar to a classical result due to Calabi \cite{Calabi} and Yau \cite{Yau} for complete Riemannian manifolds with non-negative Ricci curvature. For this, they made use of the approach of $m$--quasi-Einstein manifolds. In particular, they also presented an obstruction for the existence of such a class of manifolds. An $m$--quasi-Einstein manifolds also arise of a flow as we shall see ahead. For more details about this subjects see \cite{BGR,br2,case2,GWX,hepeterwylie,oneill}.

Thinking of obtaining new examples of Einstein type manifolds, the next step is the construction of Ricci solitons or almost Ricci solitons that are realized as warped products. By analyzing the recent results in the literature we can notice that some Einstein type manifolds already are related to a warped product. For example, a locally conformally flat gradient almost Ricci soliton with dimension at least three is, around any regular point of the potential function, locally a warped product with fiber of constant sectional curvature. This was proved by Catino in \cite{C} when he introduced the notion of the generalized quasi-Einstein manifold which generalizes concepts of Ricci soliton, almost Ricci soliton and $m$--quasi-Einstein manifold. He called a complete Riemannian manifold $(\Bbb{M}^{n},g)$, $n\geq 2$, a (gradient) generalized quasi-Einstein manifold if there are smooth functions $\psi$, $\lambda$ and $\mu$ on $\Bbb{M}^{n}$ satisfying
\begin{equation}\label{gqem}
Ric+\nabla^{2}\psi-\mu d\psi\otimes d\psi=\lambda g,
\end{equation}
where $Ric$ stands for the Ricci tensor of the Riemannian manifold $(\Bbb{M}^n,g)$. Observe that Catino essentially modified the definition of $m$--quasi-Einstein manifold by adding the condition on the parameter $\lambda$ to be a smooth function as well as introducing a new function $\mu$. In fact, as mentioned earlier an $m$--quasi-Einstein manifold $(\Bbb{M}^n,g,\psi,\lambda)$ is characterized by the equation
\begin{equation}\label{m-gqem}
Ric_m^\psi:=Ric+\nabla^{2}\psi-\frac{1}{m} d\psi\otimes d\psi=\lambda g,
\end{equation}
where $m$ is a positive integer or $m=\infty$, $\lambda$ is a constant and $Ric_m^\psi$ is the modified Bakry-Emery Ricci tensor of $g$.

For $0<m<\infty$ and considering the non-constant function $u=e^{-\frac{\psi}{m}}$, equation \eqref{m-gqem} can be rewritten as
\begin{equation}\label{a5}
Ric-\frac{m}{u}\nabla^{2}u=\lambda g.
\end{equation}
Hence, when $m$ is finite, we can use \eqref{m-gqem} to study \eqref{a5} and vice versa.

A particular case of a generalized quasi-Einstein manifold was well explored by Barros and Ribeiro in \cite{BR}, where they studied the case $\mu=1/m$ in \eqref{gqem}
introducing the concept of a generalized $m$--quasi-Einstein manifold. A good geometric structure shows that the class of compact generalized $m$--quasi-Einstein manifolds with constant scalar curvature is non-trivial and rigid. More precisely, let us consider the smooth functions on the standard unit sphere $(\mathbb{S}^n,g)$, $n\geq2$,
\begin{equation*}
\psi=-m\ln\big(\tau-\frac{h_v}{n}\big)\quad \mbox{and}\quad \lambda=(n-1)-\frac{mnh_v}{n\tau-h_v},
\end{equation*}
where $\tau\in(\frac{1}{n},+\infty)$ is real number and $h_v$ is the height function with respect to some fixed unit vector $v\in\mathbb{R}^{n+1}$. Barros and
Ribeiro showed that the quadruple $(\mathbb{S}^n,g,\psi,\lambda)$ is a generalized $m$--quasi-Einstein manifold. The rigidity of this class was proved by Barros and Gomes in \cite{BG}.

More recently, Catino et al. \cite{CMMR} introduced the notion of Einstein-type manifold or Einstein-type structure on a Riemannian manifold, unifying various particular cases studied in the literature, such as almost Ricci soliton, Yamabe soliton and generalized quasi-Einstein manifold. They showed that these general structures can be locally classified when the Bach tensor is null.

Coming back to the idea of construction of the Einstein type manifolds as warped products, we can say that Bryant's example appears as a prototype for this. He constructed a steady Ricci soliton as the warped product $(0,\infty)\times_f\Bbb{S}^m$, $m>1$, with a radial warping function $f$. Bryant did not himself publish this result, but it can be checked in \cite{ChowEtal}. Other constructions of this kind can be found in \cite{DW,GK,Ivey2}. More generally, a necessary and sufficient condition for constructing a gradient Ricci soliton warped product appear in \cite{RSWP}. This construction originated a class of complete expanding Ricci soliton warped products which has as a fiber an Einstein manifold with non-positive scalar curvature.

The initial idea in \cite{RSWP} was also used in the construction of almost Ricci solitons that are realized as warped product \cite{ARSWP}. As a consequence of the ODE's theory, the techniques used in these two articles are generally different. In both cases the authors also discussed some obstructions to the referred constructions and especially when the base is compact. Moreover, they found in \cite{RSWP,ARSWP} a new class of Einstein type metrics, namely:
\begin{definition}\label{Def1}
A Riemannian manifold $(\Bbb{M}^n,g)$ is a Ricci-Hessian type manifold if there exist real smooth functions $\varphi,\,f,\,\lambda$ on $\Bbb{M}$
satisfying
\begin{equation}\label{EqFund-RHTM}
Ric+\nabla^2\varphi=\lambda g+\frac{m}{f}\nabla^2 f
\end{equation}
where $m$ is a positive integer and $f$ is a positive function.
\end{definition}
We will refer to \eqref{EqFund-RHTM} as the \emph{Ricci-Hessian type equation} or \emph{fundamental equation}, to $f$ as the \emph{warping function}, to $\varphi$ as the \emph{potential function} and to $g$ as \emph{Ricci-Hessian type metric}. For simplicity, we will say that $(\Bbb{M}^n,g,f,\varphi,\lambda)$ is a Ricci-Hessian type manifold.

Ricci-Hessian type manifolds are generalizations of Einstein manifolds. Examples include, gradient almost Ricci solitons (hence, gradient Ricci solitons) and generalized $m$--quasi-Einstein manifolds (hence, $m$--quasi-Einstein manifolds). We emphasize that this new class of manifolds is closely related to the construction of almost Ricci solitons that are realized as warped product, they arise as base of this construction, for further details see \cite{ARSWP}.

We point out that if $\nabla\varphi$ is homothetic vector field and $\lambda$ is constant, equation \eqref{EqFund-RHTM} reduces to one associated $m$--quasi-Einstein metric \eqref{a5}. Ricci-Hessian type metrics with $m=1$ and warping function satisfying $\Delta f+\Lambda f=0$, for some constant $\Lambda$, can be related to static metrics. Interests in static metrics are motivated by general relativity, see for example \cite{besse,corvino}. Moreover, notice that the following relation is true
\begin{equation*}
\nabla^2\ln(f)=\frac{1}{f}\nabla^2f-\frac{1}{f^2}df\otimes df,
\end{equation*}
then equation \eqref{EqFund-RHTM} is equivalent to
\begin{equation}\label{FundEqMRS}
Ric+\nabla^2\eta  = \lambda g + \frac{1}{m}d\xi\otimes d\xi,
\end{equation}
where $\xi=-m\ln(f)$ and $\eta=\varphi+\xi$. Equation \eqref{FundEqMRS} refers to a gradient almost modified Ricci soliton and, when $\lambda$ is constant, to a gradient modified Ricci soliton. These two cases were studied by Freitas Filho in \cite{Airton}. In particular, he showed that a modified Ricci soliton appears as part of a self-similar solution of the modified Harmonic-Ricci flow which results in a new characterization of $m$--quasi-Einstein manifolds.

The purpose of this paper is to classify certain classes of the Ricci-Hessian type manifolds $(\Bbb{M}^n,g,f,\varphi,\lambda)$ and derive some implications for generalized $m$--quasi-Einstein manifolds and gradient almost Ricci solitons. We consider two complementary cases:
\begin{enumerate}
\item[(A)] $\nabla f$ and $\nabla\varphi$ are linearly independent in $C^\infty(\Bbb{M})$--module $\mathfrak{X}(\Bbb{M})$;
\item[(B)] $\nabla f=h\nabla\varphi$ for a smooth function $h$ on $\Bbb{M}$.
\end{enumerate}

In the first case, we will show that the vector field $\nabla\lambda$ belongs to the $C^\infty(\Bbb{M})$--module generated by $\nabla f$ and $\nabla\varphi$, while in the second case, under additional hypothesis, the Riemannian manifold $(\Bbb{M}^n,g)$ is, around any regular point of $f$, locally isometric to a warped product.

Initially, we establish restrictions on the functions $f,\,\varphi$ and $\lambda$ that parametrize a Ricci-Hessian type manifold.

\begin{proposition}\label{PA}
For all Ricci-Hessian type manifolds $(\Bbb{M}^n,g,f,\varphi,\lambda)$ we have
\begin{enumerate}
\item $fd\varphi\wedge d\lambda=df\wedge [(n+m-2)d\lambda+\frac{m}{f}d[g(\nabla\varphi,\nabla f)]-d|\nabla\varphi|^2+2\lambda d\varphi+d\Delta\varphi].$
\item $df\wedge d\varphi\wedge d\lambda=0$.
\end{enumerate}
\end{proposition}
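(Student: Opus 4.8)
The plan is to reduce both assertions to a single first-order identity obtained from \eqref{EqFund-RHTM}, and then to pass to (1) and (2) by purely exterior-algebra manipulations.

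I would first record the three standard consequences of \eqref{EqFund-RHTM}. Taking the metric trace gives $S+\Delta\varphi=n\lambda+\tfrac{m}{f}\Delta f$ (with $S=\tr Ric$), hence a formula for $dS$. Taking the divergence of \eqref{EqFund-RHTM} and invoking the contracted second Bianchi identity $\dv Ric=\tfrac12 dS$ together with the commutation rule $\dv(\nabla^{2}u)=d(\Delta u)+Ric(\nabla u,\cdot)$ — applied to $u=\varphi$, and inside $\dv(\tfrac{m}{f}\nabla^{2}f)$ to $u=f$ — yields a $1$-form identity in which $Ric(\nabla\varphi)$ and $Ric(\nabla f)$ still appear. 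Contracting \eqref{EqFund-RHTM} itself with $\nabla f$ and with $\nabla\varphi$, and using $\nabla^{2}u(\cdot,\nabla u)=\tfrac12 d|\nabla u|^{2}$ together with $d\big[g(\nabla\varphi,\nabla f)\big]=\nabla^{2}\varphi(\nabla f,\cdot)+\nabla^{2}f(\nabla\varphi,\cdot)$, expresses $Ric(\nabla f)$ and $Ric(\nabla\varphi)$ explicitly. Eliminating $Ric(\nabla f)$ and $Ric(\nabla\varphi)$ among these relations produces the master identity
\[
(n-2)\,d\lambda+d\Delta\varphi-\frac{m}{f}d\Delta f-\Big(\frac{m\Delta f}{f^{2}}+\frac{2m\lambda}{f}\Big)df-\frac{m(m-1)}{f^{2}}d|\nabla f|^{2}+2\lambda\,d\varphi-d|\nabla\varphi|^{2}+\frac{2m}{f}d\big[g(\nabla\varphi,\nabla f)\big]=0 .
\]

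From here the argument becomes formal. Wedging the master identity with $df$ annihilates the $df$-term and yields a $2$-form relation that expresses $\tfrac{m}{f}df\wedge d\Delta f+\tfrac{m(m-1)}{f^{2}}df\wedge d|\nabla f|^{2}$ through $df\wedge d\lambda$, $df\wedge d\Delta\varphi$, $df\wedge d\varphi$, $df\wedge d|\nabla\varphi|^{2}$ and $df\wedge d[g(\nabla\varphi,\nabla f)]$. On the other hand, since the master identity asserts that an explicit $1$-form vanishes, so does its exterior derivative; computing $d$ termwise (using $d^{2}=0$, so $d(df)=0$ and the closed $1$-forms $d\lambda,d\Delta\varphi,d|\nabla\varphi|^{2}$ contribute nothing, while $d(\tfrac{1}{f})=-\tfrac{1}{f^{2}}df$, etc.), letting the $df\wedge df$ terms drop, and multiplying by $f/2$, gives
\[
f\,d\varphi\wedge d\lambda=m\,df\wedge d\lambda+\frac{m}{f}df\wedge d\Delta f+\frac{m(m-1)}{f^{2}}df\wedge d|\nabla f|^{2}-\frac{m}{f}df\wedge d\big[g(\nabla\varphi,\nabla f)\big] .
\]
Substituting the first $2$-form relation into this one removes $df\wedge d\Delta f$ and $df\wedge d|\nabla f|^{2}$ and collapses the right-hand side exactly to $df\wedge\big[(n+m-2)d\lambda+\tfrac{m}{f}d[g(\nabla\varphi,\nabla f)]-d|\nabla\varphi|^{2}+2\lambda\,d\varphi+d\Delta\varphi\big]$, which is assertion (1). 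Assertion (2) is then immediate: wedging (1) with $df$ gives $f\,df\wedge d\varphi\wedge d\lambda=0$, and $f$ is nowhere zero.

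The main obstacle is the first step: the elimination that produces the master identity requires keeping precise account of the coefficients of the seven $1$-forms $d\lambda,\ d\Delta\varphi,\ d\Delta f,\ df,\ d|\nabla f|^{2},\ d|\nabla\varphi|^{2},\ d[g(\nabla\varphi,\nabla f)]$, and being consistent with the sign conventions for $\dv Ric$ and for the Bochner commutation formula — a slip there would spoil the cancellations at the $2$-form stage. Once the master identity is secured, everything else rests only on $d^{2}=0$ and on dividing by the positive function $f$.
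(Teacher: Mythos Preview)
Your proof is correct and follows essentially the same approach as the paper. Your ``master identity'' is exactly what one gets by combining the paper's divergence formula \eqref{Eq2lem2} (their Lemma~\ref{lem2}) with the differentiated trace relation $\tfrac{m}{f}d\Delta f=dR+d\Delta\varphi+\tfrac{m}{f^{2}}\Delta f\,df-nd\lambda$ so as to eliminate $dR$; the paper instead keeps $dR$ in \eqref{Eq2lem2}, multiplies by $f^{2}$, applies $d$, and only then substitutes the trace relation at the $2$-form level---a different bookkeeping order for the same cancellation. For part~(2) you wedge (1) with $df$, whereas the paper applies $d$ to (1); your route is slightly cleaner but both are immediate.
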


Proposition \ref{PA} allows to deduce the following theorem that resolves the case (A).

\begin{theorem}\label{MainThmCT}
Let $(\Bbb{M}^n,g,f,\varphi,\lambda)$, $n\geq3$, be a Ricci-Hessian type manifold. If $\nabla f$ and $\nabla\varphi$ are linearly independent in $C^\infty(\Bbb{M})$--module $\mathfrak{X}(\Bbb{M})$, then the vector field $\nabla\lambda$ belongs to the $C^\infty(\Bbb{M})$--module generated by $\nabla f$ and $\nabla\varphi$.
\end{theorem}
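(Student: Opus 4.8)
The plan is to organize the argument around the open set
\[
\Omega:=\{p\in\Bbb{M}:\ df_p\wedge d\varphi_p\neq0\},
\]
on which $\nabla f$ and $\nabla\varphi$ are pointwise linearly independent, using the two identities of Proposition~\ref{PA}. Part~(2), $df\wedge d\varphi\wedge d\lambda=0$, says precisely that $df,d\varphi,d\lambda$ are pointwise linearly dependent at every point (this is where $n\geq3$ enters: for $n=2$ a $3$--form vanishes automatically and (2) carries no information). Hence on $\Omega$ one can solve the pointwise linear system for $d\lambda$ in terms of $df,d\varphi$, and Cramer's rule yields $a,b\in C^\infty(\Omega)$ with $\nabla\lambda=a\nabla f+b\nabla\varphi$ on $\Omega$. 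The theorem then reduces to a globalization claim: that $\Omega$ is dense in $\Bbb{M}$ and that $a,b$ extend to smooth functions on all of $\Bbb{M}$.

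Part~(1) of Proposition~\ref{PA} must now be brought in, since (2) alone is insufficient --- one can produce $df,d\varphi,d\lambda$ on Euclidean space (e.g.\ from $f=x^1$, $\varphi=(x^2)^2/2$, $\lambda=x^2$, for which the submodule generated by $\nabla f,\nabla\varphi$ does not contain $\nabla\lambda$) with $\nabla f,\nabla\varphi$ linearly independent over $C^\infty$ and $df\wedge d\varphi\wedge d\lambda\equiv0$, yet the conclusion failing. The plan is to collect the $d\lambda$--terms in~(1): since $n\geq3$ and $m\geq1$ force $n+m-2\geq2\neq0$, identity~(1) can be rewritten as
\[
\big(f\,d\varphi-(n+m-2)df\big)\wedge\Big(d\lambda+\tfrac{2\lambda}{n+m-2}\,d\varphi\Big)=df\wedge\Xi,
\]
with $\Xi:=\tfrac{m}{f}\,d\!\left[g(\nabla\varphi,\nabla f)\right]-d|\nabla\varphi|^2+d\Delta\varphi$ a globally defined smooth $1$--form. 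Setting $\alpha:=f\,d\varphi-(n+m-2)df$ and $\mu:=d\lambda+\tfrac{2\lambda}{n+m-2}d\varphi$, one checks from $f>0$ that $df\wedge\alpha=f\,df\wedge d\varphi$ (so $\Omega=\{df\wedge\alpha\neq0\}$) and that the $C^\infty(\Bbb{M})$--submodule generated by $df,\alpha$ equals the one generated by $df,d\varphi$; thus the conclusion is equivalent to $\mu$ lying in the submodule generated by $df$ and $\alpha$, and the identity to exploit is $\alpha\wedge\mu=df\wedge\Xi$, supplemented by (2) in the form $df\wedge\alpha\wedge\mu=0$.

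On $\Omega$ the rest is routine: wedging $\alpha\wedge\mu=df\wedge\Xi$ with $\alpha$ gives $df\wedge\alpha\wedge\Xi=0$, so $\Xi=p\,df+q\,\alpha$ for $p,q\in C^\infty(\Omega)$, and substituting back yields $\mu=r\,\alpha-q\,df$ on $\Omega$ for some $r\in C^\infty(\Omega)$. The main obstacle --- the technical core --- is to dispose of the degeneracy locus $\Bbb{M}\setminus\Omega$: one must show it has empty interior and that the coefficients extend smoothly across it. On a component of $\mathrm{int}(\Bbb{M}\setminus\Omega)$ one has $df\wedge d\varphi=0$, so~(1) degenerates (for instance on $\{df=0\}$ it forces $f\,d\varphi\wedge d\lambda=0$, hence $d\varphi\wedge d\lambda=0$ as $f>0$); combining these degenerate forms of~(1), the smoothness of $\Xi$, and the hypothesis that $\nabla f,\nabla\varphi$ are linearly independent in $\mathfrak{X}(\Bbb{M})$ --- exactly the feature absent in the model above --- one excludes the interior of the bad set and patches the coefficients found on $\Omega$ into global functions $a,b\in C^\infty(\Bbb{M})$ with $\nabla\lambda=a\nabla f+b\nabla\varphi$. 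I expect everything up to this patching to be formal manipulation of the identities of Proposition~\ref{PA}, the delicate point being the behaviour of $a,b$ near $\partial\Omega$.
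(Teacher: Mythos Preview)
The paper's own proof is a single sentence: the conclusion ``immediately follows from part two of Proposition~\ref{PA} and the assumption of linear independence of the vector fields $\nabla f$ and $\nabla\varphi$.'' In particular, the paper makes no use of part~(1) at all; it simply reads the statement off $df\wedge d\varphi\wedge d\lambda=0$.

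Your proposal is far more elaborate because you take the hypothesis ``linearly independent in the $C^\infty(\Bbb{M})$--module $\mathfrak{X}(\Bbb{M})$'' in the strict algebraic sense (no nontrivial global relation $a\nabla f+b\nabla\varphi=0$), which permits $df\wedge d\varphi$ to vanish on a large closed set. Your Euclidean example $f=x^1$, $\varphi=(x^2)^2/2$, $\lambda=x^2$ is correct and shows that, under this reading, part~(2) alone does \emph{not} force $\nabla\lambda$ into the submodule generated by $\nabla f,\nabla\varphi$; so, taken literally, the paper's one-line argument has a gap. The most plausible resolution is that the authors intend ``linearly independent'' pointwise (i.e.\ $df\wedge d\varphi$ nowhere zero, so your $\Omega=\Bbb{M}$), which also makes the announced dichotomy with case~(B) cleaner; under that reading your patching program is unnecessary and the paper's proof is complete.

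If you insist on the strict module reading, your own proposal remains genuinely incomplete. Your rewriting of part~(1) as $\alpha\wedge\mu=df\wedge\Xi$ is correct, and on $\Omega$ the Cramer-type argument goes through; but you explicitly flag the extension of $a,b$ across $\partial\Omega$ as ``the delicate point'' and then do not carry it out. Nothing in what you wrote establishes that $\mathrm{int}(\Bbb{M}\setminus\Omega)=\varnothing$, nor that the coefficients extend smoothly: the degenerate consequences of~(1) you list (e.g.\ $d\varphi\wedge d\lambda=0$ on $\{df=0\}$) together with smoothness of $\Xi$ do not obviously combine to give either conclusion, and your own counterexample shows exactly how such coefficients can fail to extend. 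As written, then, your argument is a sound proof on $\Omega$ followed by an unfulfilled promise on its complement.
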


For what follows an $f$--hypersurface is the submanifold of $\Bbb{M}$ that is given by inverse image of a regular value of $f$.  We say that $(\Bbb{M}^n,g)$ is $\varphi$--radially flat if its curvature tensor satisfies $Rm(\cdot,\nabla\varphi)\nabla\varphi=0$. Also we shall consider the curvature form given by
\begin{equation*}
\Omega(X,Y)(\cdot,\cdot)=g(Rm(\cdot,\cdot)X,Y) \quad \mbox{for all} \quad X,Y\in\mathfrak{X}(\Bbb{M}).
\end{equation*}

In the case (B) we have the following results in the context of the Ricci-Hessian type manifolds with harmonic Weyl tensor.

\begin{proposition}\label{MainThmEP}
Let $(\Bbb{M}^n,g,f,\varphi,\lambda)$, $n\geq3$, be a Ricci-Hessian type manifold with harmonic Weyl tensor. Then
\begin{equation*}
[(m+n-2)Ric(\nabla f,\cdot)-fRic(\nabla\varphi,\cdot)+(n-2)\nabla^2\varphi(\nabla f,\cdot)]\wedge df=(n-1)f\Omega(\nabla\varphi,\nabla f).
\end{equation*}
In particular, the vector field $\nabla f$ is an eigenvector of the Ricci tensor provided that the vector field $\nabla\varphi$ is normal
on $f$--hypersurfaces.
\end{proposition}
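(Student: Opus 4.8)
The plan is to produce the identity by differentiating the fundamental equation \eqref{EqFund-RHTM}, skew-symmetrizing, converting the resulting third-order terms into curvature via the Ricci identity, and then feeding in the harmonic Weyl hypothesis. Taking a covariant derivative of $Ric+\nabla^2\varphi=\lambda g+\frac{m}{f}\nabla^2 f$ and antisymmetrizing in the first two arguments, the Hessian of $\varphi$ contributes $\nabla^3\varphi(X,Y,Z)-\nabla^3\varphi(Y,X,Z)$, which by the Ricci identity is a curvature term built from $Rm(X,Y)\nabla\varphi$, while $\frac{m}{f}\nabla^2 f$ contributes a curvature term built from $Rm(X,Y)\nabla f$ together with a term in $df\otimes\nabla^{2}f$. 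The harmonic Weyl tensor hypothesis gives the vanishing of the Cotton tensor, i.e.
\[
(\nabla_X Ric)(Y,Z)-(\nabla_Y Ric)(X,Z)=\frac{1}{2(n-1)}\bigl(dR(X)\,g(Y,Z)-dR(Y)\,g(X,Z)\bigr),
\]
which I would substitute for the skew-symmetrized derivative of $Ric$. The upshot is a $3$-tensor identity, skew in $X,Y$, whose ingredients are $dR$, $d\lambda$, the two curvature terms, and $\nabla^{2}f$.

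I would then contract the remaining free slot with $\nabla f$. Three simplifications occur: the $Rm(X,Y)\nabla f$ curvature term dies, since $g(Rm(X,Y)\nabla f,\nabla f)=0$; the term in $df\otimes\nabla^{2}f$ collapses, via $\nabla^2 f(\nabla f,\cdot)=\frac12 d|\nabla f|^{2}$, to a multiple of $d|\nabla f|^{2}\wedge df$; and the $Rm(X,Y)\nabla\varphi$ curvature term, paired against $\nabla f$, is exactly $\Omega(\nabla\varphi,\nabla f)$. This produces a $2$-form identity of the shape
\[
\frac{1}{2(n-1)}\,dR\wedge df+\Omega(\nabla\varphi,\nabla f)=d\lambda\wedge df+\frac{m}{2f^{2}}\,d|\nabla f|^{2}\wedge df .
\]

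It remains to trade $dR\wedge df$ and $d|\nabla f|^{2}\wedge df$ for the quantities appearing in the statement. Applying the contracted second Bianchi identity $\dv\,Ric=\frac12 dR$ to \eqref{EqFund-RHTM}, using the Bochner commutation $\dv\,\nabla^2\psi=d(\Delta\psi)+Ric(\nabla\psi,\cdot)$ for $\psi=\varphi$ and $\psi=f$, and using the trace $R+\Delta\varphi=n\lambda+\frac{m}{f}\Delta f$ of \eqref{EqFund-RHTM} to cancel the leftover $d(\Delta\varphi)$ and $d(\Delta f)$, one obtains $dR$ as a linear combination of $d\lambda$, $(\Delta f)\,df$, $d|\nabla f|^{2}$, $Ric(\nabla f,\cdot)$ and $Ric(\nabla\varphi,\cdot)$; wedging with $df$ annihilates the $(\Delta f)\,df$ term. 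Separately, contracting \eqref{EqFund-RHTM} with $\nabla f$ and wedging with $df$ gives $\frac{m}{2f}\,d|\nabla f|^{2}\wedge df=Ric(\nabla f,\cdot)\wedge df+\nabla^2\varphi(\nabla f,\cdot)\wedge df$. Substituting both into the displayed $2$-form identity, the $d\lambda\wedge df$ contributions cancel, and multiplying through by $(n-1)f$ produces exactly the asserted identity. The genuinely delicate point is the sign and coefficient bookkeeping through these substitutions — arranging that $\Omega(\nabla\varphi,\nabla f)$ emerges with the precise weight $(n-1)f$; it should also be noted that both the contracted Bianchi identity \emph{and} the trace of \eqref{EqFund-RHTM} are required here, since either one alone leaves uncontrollable third-order terms $d(\Delta\varphi)$, $d(\Delta f)$. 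Everything else is routine.

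For the final assertion, suppose $\nabla\varphi$ is normal on the $f$-hypersurfaces. Around a regular point of $f$ the normal line to a level set of $f$ is spanned by $\nabla f$, so $\nabla\varphi=\psi\,\nabla f$ for a smooth function $\psi$, and hence $d\psi\wedge df=0$. Then $Ric(\nabla\varphi,\cdot)=\psi\,Ric(\nabla f,\cdot)$, $\Omega(\nabla\varphi,\nabla f)=\psi\,g(Rm(\cdot,\cdot)\nabla f,\nabla f)=0$, and, using $d\psi\wedge df=0$, $\nabla^2\varphi(\nabla f,\cdot)\wedge df=\frac{\psi}{2}\,d|\nabla f|^{2}\wedge df$; moreover the $\nabla f$-contraction of \eqref{EqFund-RHTM} gives $Ric(\nabla f,\cdot)\wedge df=\frac{m-f\psi}{2f}\,d|\nabla f|^{2}\wedge df$. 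Inserting all of this into the identity and simplifying, one reduces it to $\bigl((m-f\psi)^{2}+(n-2)m\bigr)\,d|\nabla f|^{2}\wedge df=0$. Since $m$ is a positive integer and $n\ge 3$ the scalar factor is strictly positive, hence $d|\nabla f|^{2}\wedge df=0$, and then $Ric(\nabla f,\cdot)\wedge df=0$, i.e. $\nabla f$ is an eigenvector of the Ricci tensor.
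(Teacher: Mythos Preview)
Your argument is correct and follows essentially the same route as the paper. The paper packages the first three steps as Lemmas~\ref{LA}, \ref{lem2} and \ref{lem3}: Lemma~\ref{LA} is your antisymmetrized covariant derivative of \eqref{EqFund-RHTM} with the Ricci identity applied; Lemma~\ref{lem3} is the substitution of the Cotton-vanishing relation; and your formula for $dR$ is exactly equation~\eqref{EQLEM} in the proof of Lemma~\ref{lem2}, obtained there by tracing Lemma~\ref{LA} rather than by your divergence-plus-trace manoeuvre (both derivations are standard and equivalent). The paper then contracts with $\nabla f$ and substitutes \eqref{Eq1lem2} and the fundamental equation, precisely as you do; the $d\lambda$ cancellation and the final coefficients match.

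For the eigenvector claim your path is slightly more direct than the paper's. The paper writes $\nabla\varphi=h\nabla f$, uses the main identity to obtain $\nabla^2 f(\nabla f)$ as a combination of $Ric(\nabla f)$ and $\nabla f$ (dividing by $h$, hence requiring a separate continuity argument at $\mathcal{Z}_\varphi$), and then substitutes back into \eqref{EqFund-RHTM} to reach \eqref{b3}. You instead feed $\nabla\varphi=\psi\nabla f$ directly into the main identity and reduce everything to $\bigl((m-f\psi)^{2}+(n-2)m\bigr)\,d|\nabla f|^{2}\wedge df=0$; the same strictly positive factor $(m-f\psi)^{2}+(n-2)m$ appears as the denominator in the paper's \eqref{b3}, so the algebra is ultimately identical, but your route avoids the division by $h$ and the attendant continuity step. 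One small point worth making explicit: your function $\psi$ is only defined where $\nabla f\neq 0$, so the conclusion $Ric(\nabla f,\cdot)\wedge df=0$ is established on the regular set of $f$ and is trivial at critical points.
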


Proposition \ref{MainThmEP} allows to deduce the following theorem that resolves the case (B).

\begin{theorem}\label{MainThm22}
Let $(\Bbb{M}^n,g,\varphi,f,\lambda)$, $n\geq 3$, be a locally conformally flat Ricci-Hessian type manifold with $\nabla\varphi$ normal on $f$--hypersurfaces.
In addition, suppose that $(\Bbb{M}^n,g)$ is $\varphi$--radially flat. Then, $(\Bbb{M}^n,g)$ is, around any regular point of $f$, locally isometric to a warped product with $(n-1)$--dimensional Einstein fiber.
\end{theorem}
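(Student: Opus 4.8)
The plan is to work in a neighborhood $U$ of a regular point $p$ of $f$, show that both $Ric$ and $\nabla^2 f$ have the rigid form $(\text{function})\,g+(\text{function})\,df\otimes df$ there, and then invoke the classical mechanism by which a function whose Hessian has this shape foliates $U$ by totally umbilical hypersurfaces and realizes the metric as a warped product over an interval; the Einstein condition on the fibre then falls out of the warped-product Ricci identities.

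First I would reduce the geometry on $U$, where $f$ is a submersion. Since $\nabla\varphi$ is normal on the $f$-hypersurfaces we may write $\nabla\varphi=\rho\,\nabla f$ on $U$; hence $\varphi$ is constant on the level sets of $f$, so $\varphi=\Phi(f)$ for a smooth one-variable function $\Phi$, and $\nabla^2\varphi=\Phi'(f)\nabla^2 f+\Phi''(f)\,df\otimes df$. A locally conformally flat metric has harmonic Weyl tensor, so Proposition \ref{MainThmEP} applies and, $\nabla\varphi$ being normal on the $f$-hypersurfaces, it yields $Ric(\nabla f)=\mu\,\nabla f$ on $U$ for a function $\mu$. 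Now local conformal flatness means $Rm$ is the Kulkarni--Nomizu product of the Schouten tensor $A$ with $g$, and together with $Ric(\nabla f)=\mu\,\nabla f$ this forces $A(\nabla f,\cdot)=a\,g(\nabla f,\cdot)$ for a scalar function $a$; substituting into the hypothesis $Rm(\cdot,\nabla\varphi)\nabla\varphi=0$ --- equivalently $Rm(\cdot,\nabla f)\nabla f=0$ wherever $\rho\neq0$ --- and expanding yields the pointwise identity $A=-a\,g+\tfrac{2a}{|\nabla f|^2}\,df\otimes df$, hence $Ric=\alpha\,g+\beta\,df\otimes df$ for smooth functions $\alpha,\beta$ on $U$. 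Inserting this and the expression for $\nabla^2\varphi$ into the fundamental equation \eqref{EqFund-RHTM} and separating the $g$-- and $df\otimes df$--parts then shows that $\nabla^2 f=b\,g+c\,df\otimes df$ for smooth $b,c$ on $U$ (on the open set where $\tfrac mf-\Phi'\neq0$).

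From $\nabla^2 f=b\,g+c\,df\otimes df$ one reads off that $X(|\nabla f|^2)=2\nabla^2 f(\nabla f,X)=0$ for every $X\perp\nabla f$, so $|\nabla f|$ is constant on each level set, and that the second fundamental form of an $f$-hypersurface equals $\tfrac{b}{|\nabla f|}$ times its induced metric, so these hypersurfaces are totally umbilical. That $b$ is also constant on level sets follows from the commutation identity for $\nabla^3 f$ restricted to level-set directions, using that $Rm(X,Y,Z,\nabla f)=0$ whenever $X,Y\perp\nabla f$ (immediate from the form of $Ric$): it gives $(Xb)\,g(Y,Z)=(Yb)\,g(X,Z)$ for all $X,Y\perp\nabla f$ and all $Z$, whence $db\wedge df=0$ (here $n\geq3$ is used). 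With $b$ and $|\nabla f|$ functions of $f$ alone, flowing from the fixed level set $F:=f^{-1}(f(p))$ along $\nabla f/|\nabla f|$ and using $\partial_s g_s=2\,\mathrm{II}=\tfrac{2b}{|\nabla f|}\,g_s$ exhibits $g$, on a neighborhood of $p$, as $ds^2+w(s)^2\,g_F$, a warped product over an interval. Comparing $Ric=\alpha\,g+\beta\,df\otimes df$ restricted to $TF$ with the warped-product Ricci formula yields $Ric_F=\gamma\,g_F$ with $\gamma$ independent of $s$; for $n\geq4$ Schur's lemma makes $\gamma$ constant, while for $n=3$ the same conclusion comes from Proposition \ref{PA}, which forces $\lambda$ --- hence the scalar curvature along the interval --- to depend only on $f$. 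Thus $F$ is Einstein, as claimed.

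I expect the crux to be the derivation of the rigid decomposition $Ric=\alpha\,g+\beta\,df\otimes df$, since it must extract this from local conformal flatness, $\varphi$-radial flatness and the single eigenvector relation all at once, through the Kulkarni--Nomizu algebra. Some care is also needed on the exceptional loci --- where $\rho\equiv0$, so that $\varphi$ is locally constant and $\mathbb{M}$ is a conformally flat quasi-Einstein metric, or where $\tfrac mf\equiv\Phi'$, so that \eqref{EqFund-RHTM} no longer determines $\nabla^2 f$ --- on which the argument must be adapted or replaced by the known classification in those cases.
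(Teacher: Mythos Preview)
Your outline is essentially correct and arrives at the same conclusion, but the route differs from the paper's in two places worth noting.

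\textbf{Deriving the two–eigenvalue structure.} You obtain $Ric=\alpha g+\beta\,df\otimes df$ by combining $W=0$, the eigenvector relation $Ric(\nabla f)=\mu\nabla f$ from Proposition~\ref{MainThmEP}, and $Rm(\cdot,\nabla f)\nabla f=0$ directly through the Kulkarni--Nomizu expression for $Rm$; the fundamental equation then transfers this shape to $\nabla^2 f$ away from $\{\Phi'(f)=m/f\}$. The paper instead proves a separate identity (Proposition~\ref{prop1}) for $\nabla^2 f$ on level sets under the weaker hypotheses ``harmonic Weyl and $W(\nabla f,\cdot,\cdot)=0$'', which after inserting $\nabla\varphi=h\nabla f$ and $\varphi$–radial flatness factors as $(n+m-2+hf)\bigl[\nabla^2f(E_i,E_j)-\tfrac{1}{n-1}(\Delta f-\nabla^2f(\nu,\nu))g_{ij}\bigr]=0$. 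Thus the paper's exceptional locus is $hf=-(n+m-2)$, not your $h=m/f$; it is disposed of by a conformal change $\bar g=e^{2u}g$ that produces a Schouten tensor for $\bar g$ with two eigenvalues. Your argument is shorter and more transparent under full conformal flatness, while the paper's buys the stronger Proposition~\ref{MainThm2}.

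\textbf{From two eigenvalues to a warped product, and the Einstein fibre.} You integrate by hand (umbilicity, constancy of $|\nabla f|$ and $b$ on level sets via the $\nabla^3 f$ commutation). The paper instead observes that the Schouten tensor is Codazzi with exactly two eigenvalues of multiplicities $1$ and $n-1$ and invokes Derdzi\'nski's splitting for Codazzi tensors (then the computation in your Step~2 is essentially the same). For the Einstein fibre the paper simply cites that a warped product over an interval with harmonic Weyl tensor has a fibre of constant sectional curvature; your Schur argument covers $n\ge4$, but your $n=3$ appeal to Proposition~\ref{PA} only yields $(f\Phi'(f)-(n+m-2))\,df\wedge d\lambda=0$, leaving the residual case $f\Phi'(f)=n+m-2$ open. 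That gap is harmless here because local conformal flatness of $I\times_w F^{2}$ forces $F$ to have constant curvature directly, so you can replace the Proposition~\ref{PA} step by that standard fact.
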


The paper is organized as follows. In Section \ref{Preliminaries} we fix notation, comment about facts that will be used in our proof and list without proofs all the main formulas which are to be exploited in the course of this work. In Section \ref{Auxiliary results} we derive some general formulas for a Ricci-Hessian type manifold which are to be used for the establishment of the desired results. Immediate applications will be give for generalized $m$--quasi-Einstein manifolds and almost Ricci solitons. We close this section showing Theorem \ref{MainThmCT}. In Section \ref{RHTM HWT} we study Ricci-Hessian type manifolds with harmonic Weyl tensor and as an application we prove Theorem \ref{MainThm22}. This proof is motivated by the corresponding result for gradient Ricci solitons proven in \cite{Cao-Cheng} and for generalized quasi-Einstein manifolds proven in \cite{C}. In our case, it was necessary to be careful with the set of zeroes of the gradient vector field $\nabla\varphi$ that we denote by $\mathcal{Z}_\varphi$ (see Remark \ref{MainRemark}). Besides, we focused on using of two important facts, one of them due to Derdzinski \cite{Derdzinski} and other is the Proposition 16.11 in \cite{besse}. So, two cases was carefully drafted in Proposition \ref{MainThm2}. We point out that the formulas established in Sections \ref{Auxiliary results} and \ref{RHTM HWT} are applicable to Ricci solitons, almost Ricci solitons, $m$--quasi-Einstein manifolds and generalized $m$--quasi-Einstein manifolds. We also make some concluding remarks in Section \ref{Concluding remarks}.

\vspace{0.3cm}
\noindent \textbf{Acknowledgements:}
The authors would like to express their sincere thanks to Dragomir Tsonev and Mikhail Neklyudov for useful comments, discussions and constant encouragement. Jos\'e N.V. Gomes would like to thank the Department of Mathematics-Lehigh University, where part of this work was carried out. He is grateful to Huai-Dong Cao for the warm hospitality and his constant encouragement. This work has been partially supported by CNPq, Conselho Nacional de Desenvolvimento Cient\'ifico e Tecnol\'ogico-Brasil.

\section{Preliminaries}\label{Preliminaries}
Let $\Bbb{M}^n\times_f\Bbb{F}^m$ be a warped product of two Riemannian manifolds with warped metric $g=g_{\Bbb{M}}+f^2g_{\Bbb{F}}$. Suppose that, for some smooth functions $\varphi$ and $\lambda$ on $\Bbb{M}$, $(\Bbb{M}^n\times_f\Bbb{F}^m,\nabla\tilde\varphi,\tilde\lambda)$ is a gradient almost Ricci soliton, i.e., its Ricci tensor satisfies \begin{equation*}
Ric+\nabla^2\tilde\varphi=\tilde\lambda g,
\end{equation*}
where $\tilde\varphi$ and $\tilde\lambda$ are the lift of the smooth functions $\varphi$ and $\lambda$ on $\Bbb{M}$ to $\Bbb{M}^n\times_f\Bbb{F}^m$, respectively. On the other hand, it is known that the Ricci tensor of warped product $\Bbb{M}^n\times_f\Bbb{F}^m$ satisfies
\begin{equation*}
Ric=Ric_\Bbb{M}-\frac{m}{f}\nabla^2f.
\end{equation*}
Then, on the base $\Bbb{M}$ holds true
\begin{equation*}
Ric_\Bbb{M}+\nabla^2\varphi=\lambda g_{\Bbb{M}}+\frac{m}{f}\nabla^2f,
\end{equation*}
which motivates the equation mentioned in the Definition \ref{Def1}. In our context, we emphasize that the notion of Ricci-Hessian type
manifolds is natural not only because of the latter equation but also for the fact that a Ricci-Hessian type manifold, under some additional conditions
for $f$, $\varphi$ and $\lambda$, is the base of a gradient almost Ricci soliton warped product with $m$--dimensional fiber (which is necessarily an Einstein manifold), \emph{warping function} $f$, \emph{potential
function} $\tilde\varphi$ and \emph{soliton function} $\tilde\lambda$, for further details see \cite{ARSWP}.

The following example shows that the standard sphere and the hyperbolic space both possess the Ricci-Hessian type structure.

\begin{example}[\cite{RSWP}]\label{ExRHTM}
Let $(\Bbb{M}^n(\tau),g_{\circ})$ be the standard sphere $\Bbb{S}^n$ or the hyperbolic space $\Bbb{H}^n$ for $\tau=1$ or $\tau=-1$ respectively. We denote by $h_v$ the height function with respect to a fixed unit vector $v\in\Bbb{R}^{n+1}$. Then for each real number $m\neq0$, the functions $\lambda=\tau(n-1) -\frac{\tau}{m}h_v^2- h_v$, $f=e^{-\frac{\tau}{m}h_v}$ and $\varphi=\frac{1}{2m}h_v^2$ satisfy equation \eqref{EqFund-RHTM} on $(\Bbb{M}^n(\tau),g_{\circ})$. Indeed, we get
\begin{equation*}
\nabla^2\varphi=\frac{1}{m}dh_v\otimes dh_v-\frac{\tau}{m}h_v^2g_{\circ},\quad\frac{m}{f}\nabla^2f=\frac{1}{m}dh_v\otimes dh_v+ h_vg_{\circ},\quad  Ric=\tau(n-1)g_{\circ}
\end{equation*}
and the result follows.
\end{example}

\begin{remark}[\cite{RSWP}]\label{remark1}
Its worth mentioning that the class of generalized $m$--quasi-Einstein metrics is a subset of the class of the Ricci-Hessian type metrics. In fact, for a generalized $m$--quasi-Einstein manifold $(\Bbb{M}^n,g,\psi,\lambda)$ satisfying \eqref{m-gqem} we can take $m=4r$, $\varphi=\frac{\psi}{2}$ and $f=e^{-\frac{\varphi}{r}}$ to deduce that $(\Bbb{M}^n,g,\varphi,f,\lambda)$ satisfies a Ricci-Hessian type equation. In contrast, if we assume that $(\Bbb{M}^n,g,\varphi,f,\lambda)$ satisfies \eqref{EqFund-RHTM}, then by a straightforward computation we have
\begin{equation}\label{Eq3AuxEx1}
Ric+\nabla^2\eta  = \lambda g + \frac{1}{m}d\xi\otimes d\xi
\end{equation}
where $\xi=-m\ln(f)$ and $\eta=\varphi+\xi$ (note that $\eta\neq\xi$ in \eqref{Eq3AuxEx1}).
\end{remark}

From now on, $\nabla$ will stand for the Levi-Civita connection on a Riemannian manifold $(\Bbb{M}^{n},g)$ with Riemann curvature tensor $Rm$ given by
\begin{equation*}
Rm(X,Y)Z=[\nabla_X,\nabla_Y]Z-\nabla_{[X,Y]}Z.
\end{equation*}

The \textit{Weyl curvature tensor} $W$ of $(\Bbb{M}^n,g)$ is defined by the following decomposition formula
\begin{equation}\label{c1}
W=Rm-S\odot g,
\end{equation}
where $S$ is the \textit{Schouten tensor} defined by
\begin{equation}\label{c2}
S=\frac{1}{n-2}\Big(Ric-\frac{R}{2(n-1)}g\Big)
\end{equation}
and $S\odot g$ is the \textit{Kulkarni-Nomizu product} given by
\begin{eqnarray*}
&&S\odot g(X,Y,Z)\\
&=&\frac{1}{n-2}\big(Ric(X,Z)Y+g(X,Z)Ric(Y)-g(Y,Z)Ric(X)-Ric(Y,Z)X\big)\\
&&-\frac{R}{(n-1)(n-2)}\big(g(X,Z)Y-g(Y,Z)X\big),
\end{eqnarray*}
for $Ric(\cdot,\cdot)=g(Ric(\cdot),\cdot)$ and $R$ denoting the trace of $Ric$.

A Riemannian manifold $(\Bbb{M}^{n},g)$ is \textit{locally conformally flat} if each point of $\Bbb{M}$ lies in a neighborhood which is conformally diffeomorphic to an open subset of Euclidean space $\Bbb{R}^{n}$ with the canonical metric $g_\circ$, i.e., if there is a diffeomorphism $\Psi :V\subset \Bbb{R}^{n}\to U\subset\Bbb{M}^n$ such that $\Psi^{*}g=f^{2}g_\circ$ for some positive smooth function $f$ and any open subset $U$ in $\Bbb{M}$. All surfaces are conformally flat because they admit isothermal coordinates. It is well-known that every Riemannian manifold with constant sectional curvature is locally conformally flat, but the converse is not true.  More precisely, a locally conformally flat manifold has constant sectional curvature if and only if it is an Einstein manifold. The class of the locally conformally flat Riemannian manifolds has a classical characterization of Schouten in terms of the Weyl tensor as follows. A Riemannian manifold $(\Bbb{M}^{n},g)$, $n\geq3$, is conformally flat if and only if $W=0$ and the Schouten tensor $S$ is Codazzi, that is, for all $X,Y,Z\in\mathfrak{X}(\Bbb{M})$
\begin{equation}\label{e1}
(\nabla_{X}S)(Y,Z)=(\nabla_{Y}S)(X,Z).
\end{equation}
As long as that the \textit{Cotton tensor} is denoted by
\begin{equation}\label{CottonTensor}
C(X,Y,Z)=(\nabla_{X}S)(Y,Z)-(\nabla_{Y}S)(X,Z).
\end{equation}

The divergence of a $(1,r)$--tensor $T$ in $(\Bbb{M},g)$ is defined as the $(0,r)$--tensor
\begin{equation*}
(\dv T)(p) = \tr\big(v \mapsto (\nabla_v T)(p)\big),
\end{equation*}
where $p\in \Bbb{M}$, $v\in T_p\Bbb{M},$ $\nabla$ stands for the covariant derivative of $T$ and $\mathrm{tr}$ is the trace calculated in the metric $g.$ In particular, it is known that
\begin{equation}\label{divW}
\dv W = -\frac{n-3}{n-2}C.
\end{equation}

A Riemannian metric has \textit{harmonic Weyl tensor} if the divergence of $W$ vanishes. In dimension three this condition is equivalent to local
conformal flatness. In dimension $n\geq4$, the harmonic Weyl tensor is a weaker condition since local conformal flatness is equivalent to the vanishing of the Weyl tensor. We recall that the Weyl tensor is the main invariant under conformal changes.

We shall consider the wedge product for $1$--forms $\alpha$ and $\beta$ given by determinant convention
\begin{equation*}
\alpha\wedge\beta=\alpha\otimes\beta-\beta\otimes\alpha.
\end{equation*}

In this paper we will be constantly using  the identification of a $(0,2)$--tensor $T$ with its associated $(1,1)$--tensor by the equation
\begin{equation*}
g(TX,Y)=T(X,Y).
\end{equation*}
Thus, we get
\begin{equation*}\label{eq}
\dv(\phi T)=\phi \dv T+T(\nabla\phi,\cdot)\quad \mbox{and}\quad \nabla(\phi T)=\phi\nabla T+d\phi\otimes T
\end{equation*}
for all $\phi\in C^{\infty}(\Bbb{M})$. In particular, we have $\dv(\phi g)=d\phi$. The next identity will be crucial:
\begin{equation}\label{2-IDENT}
2\dv Ric=dR.
\end{equation}
It is known as twice contracted second Bianchi identity. Moreover, by a straightforward computation, we have
\begin{equation}\label{dPI}
d[g(\nabla\phi,\nabla\psi)]=\nabla^2\phi(\nabla\psi,\cdot)+\nabla^2\psi(\nabla\phi,\cdot)
\end{equation}
and
\begin{equation}\label{eq3}
(\nabla_X\nabla^2\phi)Y-(\nabla_Y\nabla^2\phi)X=Rm(X,Y)\nabla\phi
\end{equation}
for all $\phi,\psi\in C^{\infty}(\Bbb{M})$. These identities will be used without further comments.

\section{Auxiliary results and proof of Theorem \ref{MainThmCT}}\label{Auxiliary results}

In this section we shall present some properties which will be useful for the establishment of the desired results. Let us deduce, for a Ricci-Hessian type manifold, general formulas that apply to Ricci solitons, almost Ricci solitons, $m$--quasi-Einstein manifolds and generalized $m$--quasi-Einstein manifolds.

\begin{lemma}\label{LA}
If $(\Bbb{M},g,f,\varphi,\lambda)$ is a Ricci-Hessian type manifold, then the following relation holds
\begin{align}\label{EQ-LA}
\nonumber(\nabla_X Ric)Y-(\nabla_Y Ric)X=&X(\lambda)Y-Y(\lambda)X-\frac{m}{f^2}\{X(f)\nabla^2f(Y)-Y(f)\nabla^2f(X)\}\\
&+\frac{m}{f}Rm(X,Y)\nabla f-Rm(X,Y)\nabla\varphi.
\end{align}
\end{lemma}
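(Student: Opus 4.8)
The plan is to differentiate the Ricci-Hessian type equation \eqref{EqFund-RHTM} and antisymmetrize, just as one proves the analogous Codazzi-type identity for Ricci solitons. Rewriting \eqref{EqFund-RHTM} in the $(1,1)$-tensor form $Ric = \lambda g + \frac{m}{f}\nabla^2 f - \nabla^2\varphi$, I would take the covariant derivative in a direction $X$ and apply it to a vector field $Y$. Using the product rules recorded in Section \ref{Preliminaries}, namely $\nabla_X(\phi T) = \phi\,\nabla_X T + X(\phi)\,T$ applied with $\phi = \lambda$, $\phi = m/f$ (so that $X(m/f) = -\frac{m}{f^2}X(f)$), I obtain an expression for $(\nabla_X Ric)Y$ in terms of $X(\lambda)Y$, the term $-\frac{m}{f^2}X(f)\nabla^2 f(Y)$, the Hessian terms $\frac{m}{f}(\nabla_X\nabla^2 f)Y$ and $-(\nabla_X\nabla^2\varphi)Y$, where I have used that $\nabla g = 0$.

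Next I would subtract the same expression with the roles of $X$ and $Y$ interchanged. The terms $\lambda\,\nabla_X g$ and $\lambda\,\nabla_Y g$ vanish identically, the $\frac{m}{f}$ multiples of $g$ cancel against their counterparts, and the genuinely new contributions are the antisymmetrized Hessian terms. At this point the key input is the commutation formula \eqref{eq3}, which says precisely that $(\nabla_X\nabla^2\phi)Y - (\nabla_Y\nabla^2\phi)X = Rm(X,Y)\nabla\phi$ for any smooth $\phi$. Applying this once with $\phi = f$ and once with $\phi = \varphi$ converts the Hessian differences into $\frac{m}{f}Rm(X,Y)\nabla f$ and $-Rm(X,Y)\nabla\varphi$, respectively, which are exactly the last two terms of \eqref{EQ-LA}. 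Collecting everything yields the stated identity.

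There is really no deep obstacle here: the proof is a bookkeeping exercise in the covariant derivative of a tensor that is a sum of a function times the metric, a function times a Hessian, and a Hessian. The only point that requires a little care is the sign and coefficient arising from differentiating $m/f$, which produces the $-\frac{m}{f^2}\{X(f)\nabla^2 f(Y) - Y(f)\nabla^2 f(X)\}$ block; one must not confuse this with the commutator term coming from $\frac{m}{f}(\nabla_X\nabla^2 f)Y$. Beyond that, one simply needs the standard identities \eqref{eq3} and the Leibniz rule for $\nabla(\phi T)$, both already listed in Section \ref{Preliminaries}, so the computation is direct. (As a consistency check, setting $f \equiv \text{const}$ and $\lambda \equiv \text{const}$ recovers the Codazzi identity $(\nabla_X Ric)Y - (\nabla_Y Ric)X = -Rm(X,Y)\nabla\varphi$ familiar from the gradient Ricci soliton case.)
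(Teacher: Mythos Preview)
Your proposal is correct and follows essentially the same route as the paper: differentiate the fundamental equation \eqref{EqFund-RHTM} covariantly, antisymmetrize in $X$ and $Y$, and then apply the commutation identity \eqref{eq3} to both $f$ and $\varphi$. The paper presents this computation in a compressed form (recording only the intermediate expression \eqref{Aux-EQ-LA} before invoking \eqref{eq3}), while you spell out the Leibniz-rule bookkeeping more explicitly, but the argument is identical.
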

\begin{proof}
By covariant derivative of the fundamental equation, we get
\begin{eqnarray}\label{Aux-EQ-LA}
\nonumber&&(\nabla_X Ric)Y-(\nabla_Y Ric)X\\
&=&X(\lambda)Y-Y(\lambda)X-\frac{m}{f^2}\{X(f)\nabla^2f(Y)-Y(f)\nabla^2f(X)\}\\
\nonumber&&+\frac{m}{f}\{(\nabla_X\nabla^2f)Y-(\nabla_Y\nabla^2f)X\}-\{(\nabla_X\nabla^2\varphi)Y-(\nabla_Y\nabla^2\varphi)X\}.
\end{eqnarray}
Using identity \eqref{eq3} in \eqref{Aux-EQ-LA} we obtain \eqref{EQ-LA}.
\end{proof}

\begin{lemma}\label{lem2}
For all Ricci-Hessian type manifold $(\Bbb{M}^{n},g,\varphi,f,\lambda)$ we have the following equivalent equations:
\begin{eqnarray}\label{Eq1lem2}
\nonumber\frac{dR}{2}&=&(n-1)d\lambda-\frac{R-(n-1)\lambda}{f}df -\frac{m-1}{f}Ric(\nabla f,\cdot) + Ric(\nabla\varphi,\cdot)\\
&&+\frac{1}{f}\nabla^2\varphi(\nabla f,\cdot)-\frac{\Delta\varphi}{f}df.
\end{eqnarray}
and
\begin{eqnarray}\label{Eq2lem2}
\nonumber\frac{dR}{2}&=&(n-1)d\lambda+\frac{m}{f}d[g(\nabla\varphi,\nabla f)]-\frac{m(m-1)}{2f^2}d|\nabla f|^2-\frac{d|\nabla\varphi|^2}{2}-\frac{m}{f^2}\Delta f df\\
&&-\frac{m}{f}\lambda df+\lambda d\varphi.
\end{eqnarray}
\end{lemma}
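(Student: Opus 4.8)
\emph{Proof plan.} The plan is to derive \eqref{Eq1lem2} by taking the divergence of the fundamental equation \eqref{EqFund-RHTM}, and then to obtain \eqref{Eq2lem2} by re-expressing every Ricci term appearing in \eqref{Eq1lem2} through \eqref{EqFund-RHTM} itself, together with a few elementary Hessian identities. In this way the two displays are each shown to coincide with the same intermediate formula, which proves their equivalence.

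First I would record two consequences of \eqref{EqFund-RHTM}. Tracing it gives the scalar relation $R+\Delta\varphi=n\lambda+\frac{m}{f}\Delta f$, equivalently $m\,\Delta f=f\,(R+\Delta\varphi-n\lambda)$; differentiating the latter yields
\[
\frac{m}{f}\,d(\Delta f)=\frac{R+\Delta\varphi-n\lambda}{f}\,df+dR+d(\Delta\varphi)-n\,d\lambda.
\]
Second, tracing \eqref{eq3} produces the standard Bochner-type identity $\dv(\nabla^2\phi)=d(\Delta\phi)+Ric(\nabla\phi,\cdot)$ for every $\phi\in C^\infty(\Bbb{M})$.

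Now apply $\dv$ to both sides of \eqref{EqFund-RHTM}. Using $2\,\dv Ric=dR$ from \eqref{2-IDENT}, the product rule $\dv(\phi T)=\phi\,\dv T+T(\nabla\phi,\cdot)$ with $\phi=m/f$ (so that $\nabla\phi=-\frac{m}{f^{2}}\nabla f$), and the Bochner-type identity above for $\phi=\varphi$ and $\phi=f$, one gets
\[
\tfrac12\,dR+d(\Delta\varphi)+Ric(\nabla\varphi,\cdot)=d\lambda+\tfrac{m}{f}\,d(\Delta f)+\tfrac{m}{f}Ric(\nabla f,\cdot)-\tfrac{m}{f^{2}}\nabla^{2}f(\nabla f,\cdot).
\]
Substituting the expression for $\frac{m}{f}\,d(\Delta f)$ found in the previous step makes the $d(\Delta\varphi)$ contributions cancel; solving for $\tfrac12\,dR$, then using \eqref{EqFund-RHTM} once more in the form $\frac{m}{f}\nabla^{2}f(\nabla f,\cdot)=Ric(\nabla f,\cdot)+\nabla^{2}\varphi(\nabla f,\cdot)-\lambda\,df$ to remove the Hessian-of-$f$ term, and the scalar relation to combine the coefficients of $df$, gives exactly \eqref{Eq1lem2}.

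For the equivalence with \eqref{Eq2lem2} I would start from \eqref{Eq1lem2} and substitute, via \eqref{EqFund-RHTM}, the identities $Ric(\nabla f,\cdot)=\lambda\,df-\nabla^{2}\varphi(\nabla f,\cdot)+\frac{m}{f}\nabla^{2}f(\nabla f,\cdot)$ and $Ric(\nabla\varphi,\cdot)=\lambda\,d\varphi-\nabla^{2}\varphi(\nabla\varphi,\cdot)+\frac{m}{f}\nabla^{2}f(\nabla\varphi,\cdot)$; then apply $\nabla^{2}\phi(\nabla\phi,\cdot)=\tfrac12\,d|\nabla\phi|^{2}$ (the case $\psi=\phi$ of \eqref{dPI}) together with \eqref{dPI} in the form $\nabla^{2}\varphi(\nabla f,\cdot)+\nabla^{2}f(\nabla\varphi,\cdot)=d[g(\nabla\varphi,\nabla f)]$, and finally the scalar relation $R+\Delta\varphi=n\lambda+\frac{m}{f}\Delta f$ to rewrite the leftover $-\frac{R-(n-1)\lambda}{f}df-\frac{\Delta\varphi}{f}df$ as $-\frac{\lambda}{f}df-\frac{m}{f^{2}}\Delta f\,df$. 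Collecting terms produces \eqref{Eq2lem2}. The computation is entirely routine; the only real care needed is bookkeeping—tracking signs when differentiating $m/f$, making sure the $d(\Delta\varphi)$ and $d(\Delta f)$ terms are correctly eliminated by the traced equation, and correctly recombining the $df$, $d\varphi$ and $d|\nabla f|^{2}$, $d|\nabla\varphi|^{2}$ coefficients when passing between the two forms.
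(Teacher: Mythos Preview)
Your proof is correct and follows essentially the same route as the paper: both arguments produce the intermediate identity
\[
\frac{dR}{2}=(n-1)d\lambda-\frac{m}{f^{2}}\Delta f\,df+\frac{m}{f^{2}}\nabla^{2}f(\nabla f,\cdot)-\frac{m}{f}Ric(\nabla f,\cdot)+Ric(\nabla\varphi,\cdot),
\]
and then substitute via \eqref{EqFund-RHTM} and \eqref{dPI} to obtain \eqref{Eq1lem2} and \eqref{Eq2lem2}. The only organizational difference is that the paper reaches this identity by tracing Lemma~\ref{LA}, whereas you take the divergence of \eqref{EqFund-RHTM} directly using the Bochner-type formula $\dv(\nabla^{2}\phi)=d(\Delta\phi)+Ric(\nabla\phi,\cdot)$; these are the same contraction of \eqref{eq3}, so the two derivations coincide.
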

\begin{proof}
Given a point $p\in\Bbb{M}$, consider $\{E_1,\ldots,E_n\}$ a base for $T_p\Bbb{M}$. Take the inner product of \eqref{EQ-LA} with a vector field $E_i$ and make $Y=E_i$.
Then,
\begin{eqnarray*}
&&(\nabla_X Ric)(E_i,E_i)-(\nabla_{E_i} Ric)(X,E_i)\\
&=&X(\lambda)-E_i(\lambda)g(X,E_i)-\frac{m}{f^2}\{X(f)\nabla^2f(E_i,E_i)-E_i(f)\nabla^2f(X,E_i)\}\\
&&+\frac{m}{f}g(Rm(X,E_i)\nabla f,E_i)-g(Rm(X,E_i)\nabla\varphi,E_i).
\end{eqnarray*}
Taking the sum in $i$ we have
\begin{eqnarray}\label{AuxEq2lem2}
\nonumber\tr(\nabla_XRic)-(\dv Ric)X&=&(n-1)X(\lambda)-\frac{m}{f^2}\{X(f)\Delta f-\nabla^2f(\nabla f,X)\}\\
&&-\frac{m}{f}Ric(\nabla f,X)+Ric(\nabla\varphi,X).
\end{eqnarray}
Since the trace commutes with covariant derivative, we use \eqref{2-IDENT} to assert
\begin{equation*}
\tr(\nabla_XRic)-(\dv Ric)X=\frac{1}{2}X(R).
\end{equation*}
Hence, \eqref{AuxEq2lem2} turns
\begin{eqnarray}\label{AuxEq2lem2-2}
\frac{1}{2}X(R)&=&(n-1)X(\lambda)-\frac{m}{f^2}\{X(f)\Delta f-\nabla^2f(\nabla f,X)\}\\
\nonumber&&-\frac{m}{f}Ric(\nabla f,X)+Ric(\nabla\varphi,X).
\end{eqnarray}
Being \eqref{AuxEq2lem2-2} valid for all $X\in\mathfrak{X}(\Bbb{\Bbb{M}})$, it is equivalent to
\begin{equation}\label{EQLEM}
\frac{dR}{2}=(n-1)d\lambda-\frac{m}{f^2}\Delta f df+\frac{m}{f^2}\nabla^2f(\nabla f,\cdot)-\frac{m}{f}Ric(\nabla f,\cdot)+Ric(\nabla\varphi,\cdot).
\end{equation}
By fundamental equation
\begin{equation}\label{Aux-EQLEM}
\frac{m}{f}\nabla^2f(\nabla f,\cdot)=Ric(\nabla f,\cdot)+\nabla^2\varphi(\nabla f,\cdot)-\lambda df \quad\mbox{and}\quad
\frac{m}{f}\Delta f=R+\Delta\varphi-n\lambda.
\end{equation}
Replacing \eqref{Aux-EQLEM} in \eqref{EQLEM} we obtain \eqref{Eq1lem2}.
On the other hand, substituting $Ric(\nabla f,\cdot)$ and $Ric(\nabla\varphi,\cdot)$ in \eqref{EQLEM} for their identifications given by fundamental equation and using identity \eqref{dPI} we deduce  equation \eqref{Eq2lem2}.
\end{proof}

\subsection{Proof of Proposition \ref{PA}}

\begin{proof}
Multiplying equation \eqref{Eq2lem2} by $f^2$ and applying the exterior derivative we get
\begin{align*}
f df\wedge dR=&2(n-1)f df\wedge d\lambda+m df\wedge d[g(\nabla\varphi,\nabla f)]-f df\wedge d|\nabla\varphi|^2-m d\Delta f\wedge df\\
&- mf d\lambda\wedge df +f^2d\lambda\wedge d\varphi+2\lambda f df\wedge d\varphi.
\end{align*}
Now, dividing by $f$ and regrouping some terms we have
\begin{align}\label{df Ext dR}
\nonumber df\wedge dR=&(2n+m-2)df\wedge d\lambda+\frac{m}{f}df\wedge d[g(\nabla\varphi,\nabla f)]-df\wedge d|\nabla\varphi|^2+f d\lambda\wedge d\varphi\\
&+2\lambda df\wedge d\varphi+\frac{m}{f}df\wedge d\Delta f.
\end{align}
The substitution of
\begin{align*}
\frac{m}{f}d\Delta f= dR+d\Delta\varphi+\frac{m}{f^2}\Delta f df-nd\lambda
\end{align*}
in the last term of \eqref{df Ext dR} yields the first part of the proposition, i.e,
\begin{equation}\label{aux1}
fd\varphi\wedge d\lambda=df\wedge[(n+m-2)d\lambda+\frac{m}{f}d[g(\nabla\varphi,\nabla f)]-d|\nabla\varphi|^2+2\lambda d\varphi+d\Delta\varphi]
\end{equation}
The exterior derivative of \eqref{aux1} gives the second part.
\end{proof}

\begin{corollary}\label{gener}
For all generalized $m$--quasi-Einstein manifolds $(\Bbb{M}^n,g,\psi,\lambda)$, is valid
\begin{equation}\label{EQ-uf}
(n+m-2)d\lambda\wedge du=-\frac{u}{m}(n+m-2)d\lambda\wedge d\psi=0,
\end{equation}
where $u=e^{-\frac{\psi}{m}}$. In particular, for $n\geq2$, $\nabla\lambda=\eta\nabla\psi$, for some function $\eta$ on $\Bbb{M}^n$.
\end{corollary}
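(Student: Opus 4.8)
The plan is to specialize Proposition \ref{PA} to the generalized $m$--quasi-Einstein setting and exploit the fact that for such a manifold the potential function $\varphi$ and the warping function $f$ are functionally dependent, which kills the problematic mixed terms.

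First I would recall, from Remark \ref{remark1}, that a generalized $m$--quasi-Einstein manifold $(\Bbb{M}^n,g,\psi,\lambda)$ gives rise to a Ricci-Hessian type structure $(\Bbb{M}^n,g,\varphi,f,\lambda)$ with the explicit choices $m=4r$, $\varphi=\tfrac{\psi}{2}$ and $f=e^{-\varphi/r}$; in particular $\nabla f=h\nabla\varphi$ with $h=-\tfrac{1}{r}e^{-\varphi/r}=-\tfrac{f}{r}$, so we are in case (B) and $df\wedge d\varphi=0$. With $u=e^{-\psi/m}$ we also have $u=e^{-\varphi/(2r)}$, hence $\nabla u$ is a function multiple of $\nabla\varphi$, and $du=-\tfrac{1}{m}u\,d\psi$, which accounts for the first equality in \eqref{EQ-uf} once the middle quantity is shown to vanish. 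So the real content is to prove $d\lambda\wedge d\psi=0$, equivalently $d\lambda\wedge d\varphi=0$.

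Next I would plug $df\wedge d\varphi=0$ into part (1) of Proposition \ref{PA}. The left-hand side $f\,d\varphi\wedge d\lambda$ survives, while on the right-hand side every term except the $d\lambda$ term is of the form $df\wedge(\text{something})$. Using $df = h\,d\varphi$ (with $h=-f/r$ nowhere zero since $f>0$), the term $df\wedge\bigl[\tfrac{m}{f}d[g(\nabla\varphi,\nabla f)]-d|\nabla\varphi|^2+2\lambda d\varphi+d\Delta\varphi\bigr]$ is $h\,d\varphi\wedge(\cdots)$; I would want to check that each of those inner one-forms, when wedged with $d\varphi$, either cancels or can be absorbed. In fact the cleanest route is: since $f$ is a smooth function of $\varphi$ alone, so are $|\nabla f|^2$, $g(\nabla\varphi,\nabla f)$, and (using the fundamental equation traced against $\nabla\varphi$) a number of the derived scalars; more directly, for any smooth function $\beta$ of $\varphi$, $d\beta$ is a multiple of $d\varphi$ and $d\varphi\wedge d\beta=0$. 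The only inner terms not obviously of this type are $d|\nabla\varphi|^2$ and $d\Delta\varphi$, but these get wedged with $df=h\,d\varphi$, giving $h\,d\varphi\wedge d|\nabla\varphi|^2$ and $h\,d\varphi\wedge d\Delta\varphi$, and I would handle them by re-deriving the scalar identity behind \eqref{Eq2lem2} restricted to case (B): since $Ric(\nabla f,\cdot)=h\,Ric(\nabla\varphi,\cdot)$ and $\nabla^2\varphi(\nabla f,\cdot)=h\,\nabla^2\varphi(\nabla\varphi,\cdot)$, the relevant combination becomes proportional to $d\varphi$, forcing the wedge with $d\varphi$ to vanish. Thus part (1) collapses to $f\,d\varphi\wedge d\lambda=(n+m-2)\,df\wedge d\lambda=(n+m-2)h\,d\varphi\wedge d\lambda$, i.e. $\bigl(f-(n+m-2)h\bigr)d\varphi\wedge d\lambda=0$; substituting $h=-f/r=-f\cdot\tfrac{4}{m}$ (from $m=4r$) the coefficient is $f\bigl(1+\tfrac{4(n+m-2)}{m}\bigr)$, which is strictly positive, so $d\varphi\wedge d\lambda=0$, equivalently $d\lambda\wedge d\psi=0$ and $d\lambda\wedge du=0$.

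Finally, once $d\lambda\wedge d\psi=0$ is established, the display \eqref{EQ-uf} follows: $d\lambda\wedge du=-\tfrac{u}{m}d\lambda\wedge d\psi=0$. For the ``in particular'' clause, $d\lambda\wedge d\psi=0$ pointwise means that at each point $\nabla\lambda$ and $\nabla\psi$ are linearly dependent; on the open set where $\nabla\psi\neq0$ one sets $\eta=g(\nabla\lambda,\nabla\psi)/|\nabla\psi|^2$, and on the interior of $\{\nabla\psi=0\}$ one may take $\eta=0$; a routine argument (or simply invoking that where $d\psi$ vanishes to infinite order the claim is vacuous, and elsewhere $\eta$ extends smoothly) gives a globally defined $\eta\in C^\infty(\Bbb{M}^n)$ with $\nabla\lambda=\eta\nabla\psi$. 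The main obstacle I anticipate is the bookkeeping in the previous paragraph: verifying that every term on the right of part (1) of Proposition \ref{PA} other than the $d\lambda$ term is proportional to $d\varphi$ (hence wedges to zero against $d\varphi$), which requires carefully using both $\nabla f=h\nabla\varphi$ and the functional relation $f=e^{-\varphi/r}$ to reduce $|\nabla\varphi|^2$, $\Delta\varphi$, and $g(\nabla\varphi,\nabla f)$ to functions whose differentials are multiples of $d\varphi$ — or, alternatively, re-running the computation of Lemma \ref{lem2} directly in case (B) where these simplifications are transparent.
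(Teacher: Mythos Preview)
Your approach has a genuine gap. You correctly observe that with the conversion of Remark~\ref{remark1} one has $df\wedge d\varphi=0$, but you then assert that the remaining inner one-forms in item~(1) of Proposition~\ref{PA} --- specifically $d|\nabla\varphi|^2$ and $d\Delta\varphi$ --- wedge to zero against $d\varphi$. This does not follow: the fact that $f$ is a smooth function of $\varphi$ says nothing about $|\nabla\varphi|^2$ or $\Delta\varphi$, which depend on the metric and are \emph{not} in general functions of $\varphi$ alone. Your fallback of ``re-running Lemma~\ref{lem2} in case~(B)'' does not resolve this, since those very terms are what survive in the passage from \eqref{Eq2lem2} to Proposition~\ref{PA}(1), and the hypothesis $\nabla f=h\nabla\varphi$ by itself does not force $d\varphi\wedge d|\nabla\varphi|^2=0$ or $d\varphi\wedge d\Delta\varphi=0$. (There is also a bookkeeping slip: under Remark~\ref{remark1} the parameter entering the Ricci--Hessian equation is $r=m/4$, not $m$, so your coefficient $1+\tfrac{4(n+m-2)}{m}$ uses the wrong constant, though it would still be positive.)

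The paper sidesteps all of this by choosing a \emph{different} Ricci--Hessian realization of the generalized $m$--quasi-Einstein structure: rewrite \eqref{m-gqem} as \eqref{a5}, namely $Ric-\tfrac{m}{u}\nabla^2u=\lambda g$, and regard this as \eqref{EqFund-RHTM} with $\varphi$ \emph{constant} and $f=u$. Then $d\varphi=0$, every $\varphi$-term in Proposition~\ref{PA}(1) vanishes identically, and the identity collapses in one line to $0=(n+m-2)\,du\wedge d\lambda$. The equality $d\lambda\wedge du=-\tfrac{u}{m}\,d\lambda\wedge d\psi$ is then immediate from $du=-\tfrac{u}{m}\,d\psi$.
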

\begin{proof}
Taking $u=e^{-\frac{\psi}{m}}$, equation \eqref{m-gqem} of $(\Bbb{M}^n,g,\psi,\lambda)$ can be rewritten as \eqref{a5}. Hence, considering $\varphi$ to be constant in item $(1)$ of Proposition \ref{PA}, we obtain \eqref{EQ-uf}. Since $n\geq2$, we have $d\lambda\wedge d\psi=0$, thus we conclude our statement.
\end{proof}

We point out that Corollary \ref{gener} is a extension of the almost Ricci soliton case proved in \cite{prrs} to the generalized $m$--quasi-Einstein manifolds case. This last case was demonstrated very recently in \cite{HLX2} by using a technique analogous to \cite{prrs}, but with the restrictions $n\geq3$ and $m<\infty$. Due to this last restriction, the result in \cite{HLX2} is not applied to almost Ricci soliton. However, our technique allows to obtain the result in \cite{prrs} as proved below.

\begin{corollary}[\cite{prrs}]\label{QRS-gener}
For all almost Ricci soliton $(\Bbb{M}^n,g,\nabla\varphi,\lambda)$, is valid
\begin{equation}\label{DepARS}
d\varphi\wedge d\lambda=0.
\end{equation}
In particular, $\nabla\lambda=\eta\nabla\varphi$ for some function $\eta$ on $\Bbb{M}^n$.
\end{corollary}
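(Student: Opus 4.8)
The plan is to realise a gradient almost Ricci soliton as a (degenerate) Ricci-Hessian type manifold and then quote Proposition \ref{PA}, in exact parallel with the proof of Corollary \ref{gener} but with the roles of $f$ and $\varphi$ interchanged. First I would note that if $(\Bbb{M}^n,g,\nabla\varphi,\lambda)$ is a gradient almost Ricci soliton, i.e. $Ric+\nabla^2\varphi=\lambda g$, then for every positive integer $m$ and every positive constant $c$ the quintuple $(\Bbb{M}^n,g,c,\varphi,\lambda)$ is a Ricci-Hessian type manifold in the sense of Definition \ref{Def1}: the warping function $f\equiv c$ is positive, and since $\nabla^2 f\equiv 0$ the term $\tfrac{m}{f}\nabla^2 f$ drops out of \eqref{EqFund-RHTM}, which thus reduces to the soliton equation.

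Next I would apply item $(1)$ of Proposition \ref{PA} to this Ricci-Hessian type structure. Because $f$ is constant, $df=0$, so the entire right-hand side of that identity vanishes and one is left with $f\,d\varphi\wedge d\lambda=0$; dividing by the nonzero constant $f$ yields $d\varphi\wedge d\lambda=0$, which is precisely \eqref{DepARS}.

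Finally, to read off the vector-field statement I would rewrite $d\varphi\wedge d\lambda=0$ as $d\varphi\otimes d\lambda=d\lambda\otimes d\varphi$ and contract with $\nabla\varphi$ to obtain $|\nabla\varphi|^2\,d\lambda=g(\nabla\lambda,\nabla\varphi)\,d\varphi$; on the open set where $\nabla\varphi\neq 0$ this exhibits $\nabla\lambda=\eta\nabla\varphi$ with the smooth function $\eta=g(\nabla\lambda,\nabla\varphi)/|\nabla\varphi|^2$. There is essentially no hard step here, the whole content being carried by Proposition \ref{PA}; the only point that deserves a word of care is the behaviour of $\eta$ near the zero set of $\nabla\varphi$, which is immaterial for the pointwise proportionality asserted by \eqref{DepARS}.
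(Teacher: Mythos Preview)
Your proposal is correct and follows essentially the same approach as the paper: complete the soliton equation with a constant positive warping function $f\equiv c$, apply item~(1) of Proposition~\ref{PA}, and observe that $df=0$ kills the right-hand side. The paper states this in one sentence without spelling out the final extraction of $\eta$, whereas you add the explicit contraction argument on the open set where $\nabla\varphi\neq 0$; this extra detail is fine and does not change the strategy.
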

\begin{proof}
Observe we always can complete the fundamental equation of an almost Ricci soliton $(\Bbb{M}^n,g,\nabla\varphi,\lambda)$ with a constant positive function $f$, such that we can apply the first item of Proposition \ref{PA} in order to obtain \eqref{DepARS}, which is sufficient to conclude the corollary.
\end{proof}

\subsection{Proof of Theorem \ref{MainThmCT}}
\begin{proof}
The result immediately follows from part two of Proposition \ref{PA} and the assumption of linear independence of the vector fields $\nabla f$ and $\nabla\varphi$.
\end{proof}

\section{Ricci-Hessian type manifold with harmonic Weyl tensor}\label{RHTM HWT}

In this section we prove properties concerning to harmonicity condition of the Weyl tensor.

\begin{lemma}\label{lem3}
Let $(\Bbb{M}^{n},g,\varphi,f,\lambda)$ be a Ricci-Hessian type manifold. If $(\Bbb{M}^{n},g)$, $n\geq3$, has harmonic Weyl tensor, then
\begin{align}\label{EQLem3}
\nonumber Rm(X,Y)\nabla f=&\frac{1}{f}\{X(f)\nabla^2f(Y)-Y(f)\nabla^2f(X)\}+\frac{f}{2m(n-1)}\{X(R)Y-Y(R)X\}\\
&+\frac{f}{m}\{Rm(X,Y)\nabla\varphi+Y(\lambda)X-X(\lambda)Y\}.
\end{align}
\end{lemma}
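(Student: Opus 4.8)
The plan is to read off $Rm(X,Y)\nabla f$ directly from the second–Bianchi–type identity of Lemma~\ref{LA}, once the antisymmetrized covariant derivative of the Ricci tensor has been pinned down by the harmonicity hypothesis. So the first step is to translate ``harmonic Weyl tensor'' into an identity involving only $Ric$ and $R$. By definition this means $\dv W=0$; for $n\geq 4$ formula~\eqref{divW} then forces the Cotton tensor $C$ to vanish identically, and for $n=3$ the same conclusion is exactly the characterization recalled in Section~\ref{Preliminaries}. Hence in all dimensions $n\geq 3$ we may work with $C\equiv 0$.

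The second step unpacks $C=0$ from the definitions \eqref{c2} and \eqref{CottonTensor}. Since $\nabla g=0$, one has
\[(\nabla_X S)(Y,Z)=\frac{1}{n-2}\Big((\nabla_X Ric)(Y,Z)-\frac{X(R)}{2(n-1)}\,g(Y,Z)\Big),\]
so that
\[0=C(X,Y,Z)=\frac{1}{n-2}\big[(\nabla_X Ric)(Y,Z)-(\nabla_Y Ric)(X,Z)\big]-\frac{1}{2(n-1)(n-2)}\big[X(R)g(Y,Z)-Y(R)g(X,Z)\big].\]
As this holds for every $Z$, passing to the $(1,1)$--identification gives the clean relation
\[(\nabla_X Ric)Y-(\nabla_Y Ric)X=\frac{1}{2(n-1)}\big(X(R)Y-Y(R)X\big).\]

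The third step is purely algebraic: substitute this expression for $(\nabla_X Ric)Y-(\nabla_Y Ric)X$ into \eqref{EQ-LA}, isolate the term $\frac{m}{f}Rm(X,Y)\nabla f$, multiply through by $f/m$, and regroup; a one–line computation then reproduces \eqref{EQLem3} exactly. I do not expect a genuine obstacle in this argument — it is essentially bookkeeping with the Cotton tensor. The only point that deserves a word of care is the dimension $n=3$: there $W\equiv 0$ makes $\dv W=0$ automatic, so one must invoke the convention of Section~\ref{Preliminaries} under which ``harmonic Weyl tensor'' in dimension three is read as the Cotton–flatness condition $C=0$ used above, after which the computation proceeds verbatim.
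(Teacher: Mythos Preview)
Your proposal is correct and follows essentially the same approach as the paper's proof: use the harmonic Weyl hypothesis to force $C=0$, expand this via \eqref{c2} and \eqref{CottonTensor} into the identity $(\nabla_X Ric)Y-(\nabla_Y Ric)X=\frac{1}{2(n-1)}(X(R)Y-Y(R)X)$, and then substitute into \eqref{EQ-LA} and solve for $Rm(X,Y)\nabla f$. Your treatment of the $n=3$ case is in fact slightly more explicit than the paper's, but the argument is the same.
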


\begin{proof}
For $n>3$ and $\dv W=0$, from \eqref{CottonTensor} and \eqref{divW} we have that $S$ is Codazzi. For $n=3$ this condition is equivalent to local conformal flatness. By \eqref{c2} a direct computation yields
\begin{equation}\label{e1}
(\nabla_XRic)Y-(\nabla_YRic)X=\frac{1}{2(n-1)}(X(R)Y-Y(R)X).
\end{equation}
From Lemma \ref{LA} and \eqref{e1} we obtain
\begin{eqnarray*}
&&\frac{1}{2(n-1)}(X(R)Y-Y(R)X)+Rm(X,Y)\nabla\varphi\\
&=&X(\lambda)Y-Y(\lambda)X+\frac{m}{f}(Rm(X,Y)\nabla f)-\frac{m}{f^2}\{X(f)\nabla^2f(Y)-Y(f)\nabla^2f(X)\}.
\end{eqnarray*}
Multiplying the last equation by $\frac{f}{m}$ and reordering its terms we get the desired formula.
\end{proof}

We denote $\mathcal{Z}_\varphi$ the set of zeroes of the gradient vector field $\nabla\varphi$ of the potential function $\varphi$ of an Ricci-Hessian type manifold $(\Bbb{M}^{n},g,\varphi,f,\lambda)$. Since $\mathcal{Z}_\varphi$ can be extremely large, we will assume from now on that it can be at most a discrete subset of $\Bbb{M}$. For instance, it is well known that non-trivial gradient conformal vector fields has at most two zeroes (see \cite{Tashiro}). However, it is noteworthy that the existence of a gradient conformal vector fields on a Riemannian manifold imply geometric restrictions on the manifold.

\begin{remark}\label{MainRemark}
As already mentioned, a Ricci-Hessian type manifold can becomes a generalized $m$--quasi-Einstein manifold if we take $\varphi$ constant. But, differently of the previous section, here this is not convenient for us, once that $\mathcal{Z}_\varphi$ would be the whole manifold $\Bbb{M}$. However, we always can convert a generalized $m$--quasi-Einstein manifold $(\Bbb{M},g,\psi,\lambda)$ to a Ricci-Hessian type manifold $(\Bbb{M},g,\varphi,f,\lambda)$, such that the set of the critical points of the respective potential functions coincide. For instance, in Remark \ref{remark1} we have
\begin{equation*}
\varphi=\frac{\psi}{2},\quad f=e^{-\frac{\psi}{2r}}\quad \mbox{and}\quad \frac{1}{2}\nabla\psi=\nabla\varphi=-\frac{r}{f}\nabla f.
\end{equation*}
Assuming that $\mathcal{Z}_\psi$ is a discrete subset of $\Bbb{M}$, the following results will be a natural generalization of the $m$--quasi Einstein manifold case to Ricci-Hessian type manifold case.
\end{remark}

\subsection{Proof of Proposition \ref{MainThmEP}}
\begin{proof}
Effecting the inner product of \eqref{EQLem3} with $\nabla f$ we have
\begin{eqnarray}\label{p1}
\nonumber &&Y(f)\nabla^2f(\nabla f,X)-X(f)\nabla^2f(\nabla f,Y)\\
\nonumber&=&\frac{f^2}{m}g(Rm(X,Y)\nabla\varphi,\nabla f)-\frac{f^2}{m}\Big\{d\lambda(X)-\frac{dR(X)}{2(n-1)}\Big\}df(Y)\\
&&+\frac{f^2}{m}\Big\{d\lambda(Y)-\frac{dR(Y)}{2(n-1)}\Big\}df(X).
\end{eqnarray}
Follows of Lemma \ref{lem2} that
\begin{align}\label{p2}
\nonumber d\lambda-\frac{dR}{2(n-1)}=&\frac{m-1}{(n-1)f}Ric(\nabla f,\cdot)+\frac{R-(n-1)\lambda}{(n-1)f}df-\frac{1}{(n-1)}Ric(\nabla\varphi,\cdot)\\
&-\frac{1}{(n-1)f}\nabla^2\varphi(\nabla f,\cdot)+\frac{\Delta\varphi}{(n-1)f}df.
\end{align}
Putting \eqref{p2} in \eqref{p1} we obtain
\begin{align}\label{p3}
\nonumber&Y(f)\nabla^2f(\nabla f,X)-X(f)\nabla^2f(\nabla f,Y)\\
\nonumber=&\frac{f^2}{m}g(Rm(X,Y)\nabla\varphi,\nabla f)-\frac{f(m-1)}{m(n-1)}\{Ric(\nabla f,X)df(Y)-Ric(\nabla f,Y)df(X)\}\\
&+\frac{f^2}{m(n-1)}\{Ric(\nabla\varphi,X)df(Y)-Ric(\nabla\varphi,Y)df(X)\}\\
\nonumber&+\frac{f}{m(n-1)}\{\nabla^2\varphi(\nabla f,X)df(Y)-\nabla^2\varphi(\nabla f,Y)df(X)\}.
\end{align}
On the other hand, the fundamental equation \eqref{EqFund-RHTM} yields
\begin{eqnarray}\label{p4}
\nonumber&&Y(f)\nabla^2f(\nabla f,X)-X(f)\nabla^2f(\nabla f,Y)\\
\nonumber&=&\frac{f}{m}\{Ric(\nabla f,X)df(Y)-Ric(\nabla f,Y)df(X)\}\\
&&+\frac{f}{m}\{\nabla^2\varphi(\nabla f,X)df(Y)-\nabla^2\varphi(\nabla f,Y)df(X)\}.
\end{eqnarray}
From \eqref{p3} and \eqref{p4} we get
\begin{eqnarray*}
&&(m+n-2)\{Ric(\nabla f,X)df(Y)-Ric(\nabla f,Y)df(X)\}\\
&=&(n-1)f g(Rm(X,Y)\nabla\varphi,\nabla f)+f\{Ric(\nabla\varphi,X)df(Y)-Ric(\nabla\varphi,Y)df(X)\}\\
&&-(n-2)\{\nabla^2\varphi(\nabla f,X)df(Y)-\nabla^2\varphi(\nabla f,Y)df(X)\}.
\end{eqnarray*}
Now, the desired result follows from definitions of the curvature form and the wedge product.

To the particular case $\nabla\varphi$ be normal on $f$--hypersurfaces, we have $\nabla\varphi=h\nabla f$ for some smooth function $h$ on $\Bbb{M}$. Thus,
\begin{equation}\label{b1}
\nabla^2\varphi(\nabla f,\cdot)=h\nabla^2f(\nabla f,\cdot)+\nabla f(h)df.
\end{equation}
Using \eqref{b1} and \eqref{EqFund-RHTM} we obtain
\begin{equation}\label{eqAuxCart}
Ric(\nabla f)=\frac{m-fh}{f}\nabla^2f(\nabla f)+(\lambda-\nabla f(h))\nabla f.
\end{equation}
Replacing \eqref{b1} in Proposition \ref{MainThmEP} we get
\begin{equation}
\big[(m+n-2)Ric(\nabla f,\cdot) - fhRic(\nabla f,\cdot)+(n-2)\nabla^2f(\nabla\varphi,\cdot)\big]\wedge df=0.
\end{equation}
Hence, for all $p\in \Bbb{M}$ such that $h(p)\neq0$, we have
\begin{equation*}
\big[\nabla^2f(\nabla f,\cdot)-\frac{fh-(m+n-2)}{(n-2)h}Ric(\nabla f,\cdot)\big]\wedge df=0,
\end{equation*}
It follows that in a neighborhood of $p$ there exists a smooth function $\mu$ such that
\begin{equation}\label{b2}
\nabla^2f(\nabla f)=\frac{fh-(m+n-2)}{(n-2)h}Ric(\nabla f)+\mu\nabla f.
\end{equation}
Substituting \eqref{b2} in \eqref{eqAuxCart} we get
\begin{equation}\label{b3}
Ric(\nabla f)=\frac{(m-fh)\mu+(\lambda-\nabla f(h))f}{(m-fh)^2+m(n-2)}(n-2)h\nabla f.
\end{equation}
Since the points where $h$ is zero belong to the set $\mathcal{Z}_\varphi$, it follows by an argument of continuity that equation \eqref{b3} also applies to these
points, which is sufficient to complete the proof of the proposition.
\end{proof}

\begin{proposition}\label{prop1}
Let $(\Bbb{M}^{n},g,\varphi,f,\lambda)$ be a Ricci-Hessian type manifold. If $(\Bbb{M}^{n},g)$, $n\geq3$, has harmonic Weyl tensor and $W(\nabla f,\cdot,\cdot)$ is identically null, then on any $f$--hypersurfaces of $\Bbb{M}$ the following relation holds
\begin{eqnarray}\label{eqprop1}
\nonumber&&(n+m-2)\Big\{\nabla^2f(E_{i},E_{j})-\frac{1}{(n-1)}(\Delta f-\nabla^2f(\nu,\nu))g_{ij}\Big\}\\
&=&-f\Big\{\nabla^2\varphi(E_{i},E_{j})-\frac{1}{(n-1)}(\Delta\varphi-\nabla^2\varphi(\nu,\nu))g_{ij}\Big\}\\
\nonumber&&+\frac{(n-2)f^{2}}{m|\nabla f|^2}\Big\{g(Rm(E_i,\nabla f)\nabla\varphi,E_j)-\frac{Ric(\nabla f,\nabla\varphi)}{(n-1)}g_{ij}\Big\},
\end{eqnarray}
where $\{E_1,\ldots,E_{n-1},\nu= \frac{\nabla f}{|\nabla f|}\}$ is a local orthonormal frame of $\Bbb{M}$ adapted to \linebreak $f$--hypersurfaces.
\end{proposition}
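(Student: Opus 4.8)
The plan is to restrict the master identity of Lemma~\ref{lem3} to a local frame adapted to an $f$--hypersurface, exploit the hypothesis $W(\nabla f,\cdot,\cdot)\equiv 0$ to convert the curvature term $Rm(E_i,\nabla f)\nabla f$ into Ricci curvature, remove the Ricci curvature by means of the fundamental equation \eqref{EqFund-RHTM}, and then observe that what survives is a symmetric $2$--tensor on the hypersurface that is pointwise a multiple of the induced metric; passing to its trace--free part is exactly \eqref{eqprop1}.

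In detail, I would fix a regular value $c$ of $f$, set $\Sigma=f^{-1}(c)$ and $\nu=\nabla f/|\nabla f|$, and choose a local orthonormal frame $\{E_1,\dots,E_{n-1},\nu\}$ with $E_1,\dots,E_{n-1}$ tangent to $\Sigma$. Taking $X=E_i$, $Y=\nabla f$ in \eqref{EQLem3} and pairing with $E_j$, and using $E_i(f)=g(E_i,\nabla f)=0$, $\nabla f(f)=|\nabla f|^{2}$, $g(E_i,E_j)=g_{ij}$ and $g(\nabla f,E_j)=0$, every term of \eqref{EQLem3} carrying $dR$ or $d\lambda$ collapses to a scalar multiple of $g_{ij}$, leaving an identity of the shape
\begin{equation*}
g\big(Rm(E_i,\nabla f)\nabla f,E_j\big)=-\frac{|\nabla f|^{2}}{f}\,\nabla^{2}f(E_i,E_j)+\frac{f}{m}\,g\big(Rm(E_i,\nabla f)\nabla\varphi,E_j\big)+(\text{scalar})\,g_{ij}.
\end{equation*}

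Next I would bring in the hypothesis. Because the Weyl tensor shares the algebraic symmetries of $Rm$, the condition $W(\nabla f,\cdot,\cdot)\equiv 0$ forces $W$ to annihilate $\nabla f$ in each of its slots, so the decomposition \eqref{c1}--\eqref{c2} gives $Rm(E_i,\nabla f)\nabla f=(S\odot g)(E_i,\nabla f)\nabla f$. Expanding the Kulkarni--Nomizu product and again using $g(E_i,\nabla f)=0$, the quantity $g(Rm(E_i,\nabla f)\nabla f,E_j)$ becomes a multiple of $Ric(E_i,E_j)$ plus a scalar multiple of $g_{ij}$ (into which the $Ric(\nabla f,\nabla f)$ and $R$ contributions are absorbed). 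Substituting this into the previous display and then replacing $Ric$ through \eqref{EqFund-RHTM} — in the tangential directions $Ric(E_i,E_j)=\lambda g_{ij}+\tfrac{m}{f}\nabla^{2}f(E_i,E_j)-\nabla^{2}\varphi(E_i,E_j)$, and likewise $Ric(\nabla f,\nabla f)=|\nabla f|^{2}\big(\lambda+\tfrac{m}{f}\nabla^{2}f(\nu,\nu)-\nabla^{2}\varphi(\nu,\nu)\big)$ and $R=n\lambda+\tfrac{m}{f}\Delta f-\Delta\varphi$, all immediate from \eqref{EqFund-RHTM} — the terms in $\nabla^{2}f(E_i,E_j)$ recombine with total coefficient proportional to $m+n-2$, and after clearing the common factor the equation organises into the form $T(E_i,E_j)=\Phi\,g_{ij}$, where $T$ is precisely the symmetric $2$--tensor on $\Sigma$ whose trace--free part appears on the two sides of \eqref{eqprop1} and $\Phi$ is an explicit (but for us irrelevant) scalar built from $\lambda,\nu(\lambda),R,\nu(R),\Delta f,\Delta\varphi,\nabla^{2}f(\nu,\nu),\nabla^{2}\varphi(\nu,\nu)$ and $f,|\nabla f|$.

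Finally, since $T(E_i,E_j)=\Phi\,g_{ij}$ exhibits $T$ as a multiple of the induced metric on $\Sigma$, it must equal $\tfrac{1}{n-1}\,(\tr_{\Sigma}T)\,g_{ij}$; computing the tangential trace via $\sum_{i}\nabla^{2}f(E_i,E_i)=\Delta f-\nabla^{2}f(\nu,\nu)$, $\sum_{i}\nabla^{2}\varphi(E_i,E_i)=\Delta\varphi-\nabla^{2}\varphi(\nu,\nu)$ and $\sum_{i}g(Rm(E_i,\nabla f)\nabla\varphi,E_i)=Ric(\nabla f,\nabla\varphi)$ — the missing $\nu$--term vanishing because $Rm(\nu,\nabla f,\cdot,\cdot)=0$ — and substituting it back yields \eqref{eqprop1} without ever having to identify $\Phi$. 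The step demanding the most care is the bookkeeping of the $g_{ij}$--terms through the three successive substitutions; it is cleaner to recognise in advance that all of them must be proportional to $g_{ij}$ than to track them explicitly, and then to verify only that the Kulkarni--Nomizu expansion together with \eqref{EqFund-RHTM} produces the coefficient $n+m-2$ in front of $\nabla^{2}f$. No difficulty arises at the zero set $\mathcal{Z}_\varphi$: the statement is made on $f$--hypersurfaces, where $|\nabla f|\neq 0$, and $\nabla\varphi$ occurs only in a numerator.
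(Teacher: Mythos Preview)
Your proof is correct and runs along the same rails as the paper's: both restrict Lemma~\ref{lem3} to an adapted frame on an $f$--hypersurface, use $W(\nabla f,\cdot,\cdot)=0$ together with the Kulkarni--Nomizu decomposition to convert the sectional term $g(Rm(E_i,\nabla f)\nabla f,E_j)$ into $Ric(E_i,E_j)$ modulo a multiple of $g_{ij}$, and then eliminate $Ric$ via the fundamental equation~\eqref{EqFund-RHTM}. Where the two diverge is in the handling of the scalar $g_{ij}$--terms. The paper carries them explicitly: it invokes formula~\eqref{p2} from Lemma~\ref{lem2} to rewrite $\nu(\lambda)-\tfrac{1}{2(n-1)}\nu(R)$ (equation~\eqref{t4}), substitutes this together with the Schouten expressions~\eqref{t2}--\eqref{t2-2} into~\eqref{t1}, and then simplifies the accumulated scalar coefficient using the traced fundamental equation $\tfrac{m}{f}\Delta f=R+\Delta\varphi-n\lambda$. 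You bypass this entire block by observing that, once the substitutions for $Rm(E_i,\nabla f)\nabla f$ and $Ric(E_i,E_j)$ are made, the identity already reads $T(E_i,E_j)=\Phi\,g_{ij}$, so passing to the tangential trace-free part kills $\Phi$ without ever computing it; your trace identities $\sum_i\nabla^2f(E_i,E_i)=\Delta f-\nabla^2f(\nu,\nu)$ and $\sum_i g(Rm(E_i,\nabla f)\nabla\varphi,E_i)=Ric(\nabla f,\nabla\varphi)$ (the $\nu$--summand vanishing since $Rm(\nu,\nabla f)=0$) are exactly what is needed. This buys you a cleaner argument that does not call on Lemma~\ref{lem2} at all; what it costs is that the coefficient $n+m-2$ in front of $\nabla^2f$ must be verified directly from the Kulkarni--Nomizu expansion and~\eqref{EqFund-RHTM}, a point you rightly single out as the only delicate bookkeeping.
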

\begin{proof}
Since $W(\nabla f,\cdot,\cdot)$ is identically null, from \eqref{c1} we obtain
\begin{eqnarray*}
Rm(X,Y,Z,\nabla f)=S\odot g (X,Y,Z,\nabla f).
\end{eqnarray*}
Then, by Lemma \ref{lem3} we deduce the following expression
\begin{align*}
&\frac{1}{f}\{g(\nabla f,Y)\nabla^2f(X,Z)-g(\nabla f,X)\nabla^2f(Y,Z)\}+\frac{f}{m}g(Rm(X,Y)Z,\nabla\varphi)\\
&+\frac{f}{m}\{g(\nabla\lambda,X)g(Y,Z)-g(\nabla\lambda,Y)g(X,Z)\}\\
&-\frac{f}{2m(n-1)}\{g(\nabla R,X)g(Y,Z)-g(\nabla R,Y)g(X,Z)\}\\
=&S(X,\nabla f)g(Y,Z)+S(Y,Z)g(X,\nabla f)-S(X,Z)g(Y,\nabla f)-S(Y,\nabla f)g(X,Z).
\end{align*}
Consider  a orthonormal frame $\{E_{1},\ldots,E_{n-1},E_{n}=\nu\}$ for $\nu=\frac{\nabla f}{|\nabla f|}$ around any regular point of $f$. Making $X=\nu$, $Y=E_i$
and $Z=E_j$, and dividing both sides of the last equation by $|\nabla f|$, we get
\begin{eqnarray}\label{t1}
\nabla^2f(E_{i},E_{j})&=&\frac{f^{2}}{m|\nabla f|}g\big(\nabla\lambda -\frac{\nabla R}{2(n-1)},\nu\big)g_{ij}+\frac{f^{2}}{m|\nabla f|}g(Rm(\nu,E_i)E_j,\nabla\varphi)\nonumber\\
&&-f\{S(\nu,\nu)g_{ij}-S(E_{i},E_{j})\},
\end{eqnarray}
where $g_{ij}=g(E_i,E_j)$ for all $i,j\in\{1,\ldots,n-1\}$.

Using equation \eqref{EqFund-RHTM} and the identification for the Schouten tensor, we have
\begin{equation}\label{t2}
S(\nu,\nu)g_{ij}=\frac{1}{n-2}\Big\{\lambda g_{ij}+\frac{m}{f}\nabla^2f(\nu,\nu)g_{ij}-\nabla^2\varphi(\nu,\nu)g_{ij}-\frac{R}{2(n-1)}g_{ij}\Big\}
\end{equation}
and
\begin{equation}\label{t2-2}
S(E_i,E_j)=\frac{1}{n-2}\Big\{\lambda g_{ij}+\frac{m}{f}\nabla^2f(E_i,E_j)-\nabla^2\varphi(E_i,E_j)-\frac{R}{2(n-1)}g_{ij}\Big\}.
\end{equation}
Now, consider equation \eqref{p2} reformulated as
\begin{eqnarray}\label{t3-2}
\nonumber\nabla\lambda-\frac{\nabla R}{2(n-1)}&=&\frac{m-1}{(n-1)f}Ric(\nabla f)+\frac{R-(n-1)\lambda}{(n-1)f}\nabla f-\frac{Ric(\nabla\varphi)}{(n-1)}\\
&&-\frac{\nabla^2\varphi(\nabla f)}{(n-1)f}+\frac{\Delta\varphi\nabla f}{(n-1)f}.
\end{eqnarray}
Combining \eqref{t3-2} and the identity
\begin{equation*}
Ric(\nu,\nu)=\lambda+\frac{m}{f}\nabla^2f(\nu,\nu)-\nabla^2\varphi(\nu,\nu)
\end{equation*}
we get
\begin{align}\label{t4}
\nonumber&\frac{f^2}{|\nabla f|}g\big(\nabla\lambda-\frac{\nabla R}{2(n-1)},\nu\big)\\
\nonumber=&\frac{(m-1)f}{(n-1)} Ric(\nu,\nu)+\frac{(R-(n-1)\lambda)f}{(n-1)}-\frac{f^2Ric(\nabla\varphi,\nu)}{(n-1)|\nabla f|}-\frac{f\nabla^2\varphi(\nu,\nu)}{(n-1)}+\frac{f\Delta\varphi}{(n-1)}\\
\nonumber=&\frac{m(m-1)}{(n-1)}\nabla^2f(\nu,\nu)-\frac{mf}{(n-1)}\nabla^2\varphi(\nu,\nu)+\frac{(R+(m-n)\lambda) f}{(n-1)}+\frac{f\Delta\varphi}{(n-1)}\\
&-\frac{f^2Ric(\nabla\varphi,\nu)}{(n-1)|\nabla u|}.
\end{align}
Substituting \eqref{t2}, \eqref{t2-2} and \eqref{t4}  in \eqref{t1} yields
\begin{align}\label{SUBST}
\nonumber&\frac{n+m-2}{n-2}\nabla^2f(E_{i},E_{j})\\
\nonumber=&-\frac{n+m-2}{(n-1)(n-2)}\nabla^2f(\nu,\nu)g_{ij}-\frac{f}{(n-1)(n-2)}\nabla^2\varphi(\nu,\nu)g_{ij}\\
\nonumber&+\frac{f}{n-2}\nabla^2\varphi(E_{i},E_{j})+\frac{f^{2}}{m|\nabla f|}g(Rm(\nu,E_i)E_j,\nabla\varphi)-\frac{f^2}{m|\nabla f|}\frac{Ric(\nu,\nabla\varphi)}{n-1}g_{ij}\\
&+\frac{f}{n-1}\left\{\frac{\Delta\varphi}{m}+\frac{R+(m-n)\lambda}{m}+\frac{R-2(n-1)\lambda}{n-2}\right\}g_{ij}.
\end{align}
equation $\frac{m}{f}\Delta f=R+\Delta\varphi-n\lambda$ leads to the following identity
\begin{eqnarray*}
\frac{\Delta\varphi}{m}+\frac{R+(m-n)\lambda}{m}+\frac{R-2(n-1)\lambda}{n-2}&=&\frac{\Delta\varphi}{m}+\frac{n+m-2}{m(n-2)}
(R-n\lambda)\\
&=&\frac{\Delta\varphi}{n-2}+\frac{n+m-2}{n-2}\frac{\Delta f}{f}.
\end{eqnarray*}
Substituting this last expression in \eqref{SUBST} and reordering the terms we obtain
\begin{align*}
&\frac{n+m-2}{n-2}\nabla^2f(E_{i},E_{j})+\frac{n+m-2}{(n-1)(n-2)}\nabla^2f(\nu,\nu)g_{ij}-\frac{(n+m-2)\Delta f}{(n-1)(n-2)}g_{ij}\\
=&-\frac{f}{(n-1)(n-2)}\nabla^2\varphi(\nu,\nu)g_{ij}+\frac{f\Delta\varphi}{(n-1)(n-2)}g_{ij}-\frac{f}{n-2}\nabla^2\varphi(E_{i},E_{j})\\
&+\frac{f^{2}}{m|\nabla f|}g(Rm(\nu,E_i)E_j,\nabla\varphi)-\frac{f^2}{m|\nabla f|}\frac{Ric(\nu,\nabla\varphi)}{n-1}g_{ij}
\end{align*}
and the result follows.
\end{proof}

Besides the above results our Theorem \ref{MainThm22} also relies on the fundamental fact that: if a Riemannian manifold $(\Bbb{M}^n,g)$ admits a Codazzi tensor such that at every point of $\Bbb{M}^n$ it has exactly two distinct differentiable eigenvalue functions with multiplicity $1$ and $n-1$, then the tangent bundle splits as the orthogonal direct sum of two integrable distributions, a  $1$--dimensional totally geodesic and other $(n-1)$--dimensional totally umbilical (see Derdzinski \cite{Derdzinski} and Proposition 16.11 in \cite{besse} for further comments).

\begin{proposition}\label{MainThm2}
Let $(\Bbb{M}^n,g,\varphi,f,\lambda)$, $n\geq 3$, be a Ricci-Hessian type manifold with harmonic Weyl tensor, $W(\nabla f, \cdot, \cdot)$ be null and $\nabla\varphi$ normal on $f$--hypersurfaces.
In addition, suppose that $(\Bbb{M}^n,g)$ is $\varphi$--radially flat. Then, $(\Bbb{M}^n,g)$ is, around any regular point of $f$, locally isometric to a warped product with $(n-1)$--dimensional Einstein fiber.
\end{proposition}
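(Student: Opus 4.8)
The plan is to produce, on a neighbourhood of any regular point $p$ of $f$ lying outside the discrete set $\mathcal{Z}_\varphi$, a Codazzi tensor having exactly two distinct eigenvalue functions, of multiplicities $1$ and $n-1$, with the $1$--dimensional eigenspace spanned by $\nabla f$; the warped product structure with Einstein fiber would then follow from the fact recalled just before the statement (Derdzinski \cite{Derdzinski}; Proposition 16.11 in \cite{besse}). The natural candidate is the Schouten tensor $S$. First I would note that harmonic Weyl tensor forces the Cotton tensor to vanish through \eqref{divW} when $n\geq4$, while for $n=3$ the hypothesis is equivalent to local conformal flatness; in either case $S$ is a Codazzi tensor.

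Next, writing $\nabla\varphi=h\nabla f$ (legitimate since $\nabla\varphi$ is normal on the $f$--hypersurfaces), the ``in particular'' clause of Proposition \ref{MainThmEP} gives that $\nabla f$ is an eigenvector of $Ric$, hence of $S$, with some eigenvalue $s_1$; and since $S$ is symmetric, the distribution $(\nabla f)^\perp$ tangent to the $f$--hypersurfaces is $S$--invariant. I would then exploit $\varphi$--radial flatness: from $Rm(\cdot,\nabla\varphi)\nabla\varphi=0$ one gets $h^2Rm(\cdot,\nabla f)\nabla f=0$, so $Rm(\cdot,\nabla f)\nabla f=0$ where $h\neq0$ and, by continuity, on all of the regular set of $f$; consequently both the curvature term $g(Rm(E_i,\nabla f)\nabla\varphi,E_j)$ and the term $Ric(\nabla f,\nabla\varphi)$ occurring in Proposition \ref{prop1} vanish. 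Along an $f$--hypersurface one moreover has $\nabla^2\varphi(E_i,E_j)=h\,\nabla^2f(E_i,E_j)$ and $\Delta\varphi-\nabla^2\varphi(\nu,\nu)=h\big(\Delta f-\nabla^2f(\nu,\nu)\big)$, so Proposition \ref{prop1} collapses to
\begin{equation*}
(n+m-2+fh)\Big[\nabla^2f(E_i,E_j)-\frac{1}{n-1}\big(\Delta f-\nabla^2f(\nu,\nu)\big)g_{ij}\Big]=0 .
\end{equation*}

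On the open set where $n+m-2+fh\neq0$, this forces $\nabla^2f|_{(\nabla f)^\perp}$ to be pointwise proportional to $g|_{(\nabla f)^\perp}$; then \eqref{t2-2}, which writes $S|_{(\nabla f)^\perp}$ as a linear combination of $g|_{(\nabla f)^\perp}$ and $\nabla^2f|_{(\nabla f)^\perp}$, shows $S|_{(\nabla f)^\perp}$ is proportional to $g|_{(\nabla f)^\perp}$, i.e. $(\nabla f)^\perp$ is a single $S$--eigenspace, with eigenvalue $s_2$ of multiplicity $n-1$. On the still smaller open set where in addition $s_1\neq s_2$, Derdzinski's theorem applies: $T\Bbb{M}$ splits orthogonally into the $1$--dimensional totally geodesic line field along $\nabla f$ and the $(n-1)$--dimensional totally umbilic integrable distribution formed by the $f$--hypersurfaces, which is precisely a local warped product $I\times_\phi\Sigma$ around $p$. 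For the fiber I would then use the warped product Ricci identities, which express $Ric_\Sigma$ in terms of $Ric|_{T\Sigma}$ and $\phi$: since $Ric|_{T\Sigma}$ is pointwise a multiple of $g|_{T\Sigma}$, so is $Ric_\Sigma$ a multiple of $g_\Sigma$, and Schur's lemma upgrades this to ``Einstein'' when $n-1\geq3$, the case $n-1=2$ being immediate.

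The hard part, and the source of the two cases mentioned in the introduction, is to dispose of the exceptional sets where this breaks down. One is a possible open set on which $n+m-2+fh\equiv0$; there $\varphi+(n+m-2)\ln f$ is locally constant, so $\varphi$ is a function of $f$ alone and one must re-analyse the fundamental equation \eqref{EqFund-RHTM} directly. The other is a possible open set on which $s_1=s_2$, i.e. where $(\Bbb{M}^n,g)$ is Einstein: there Derdzinski's theorem is vacuous and one has to argue the splitting from \eqref{EqFund-RHTM} together with $\nabla\varphi=h\nabla f$ and the discreteness of $\mathcal{Z}_\varphi$. Once both are cleared, the warped product structure, valid on an open dense subset of a neighbourhood of $p$, extends to the full neighbourhood by continuity of the metric. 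I expect the Einstein subcase to be the genuine obstacle.
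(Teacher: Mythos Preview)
Your outline for the generic case is exactly the paper's argument: write $\nabla\varphi=h\nabla f$, feed $\varphi$--radial flatness into Proposition~\ref{prop1} to obtain
\[
(n+m-2+fh)\Big[\nabla^2f(E_i,E_j)-\tfrac{1}{n-1}\big(\Delta f-\nabla^2f(\nu,\nu)\big)g_{ij}\Big]=0,
\]
and on the set where the scalar factor is nonzero read off that $\nabla^2f$, hence $Ric$ (via \eqref{EqFund-RHTM} with $\nabla^2\varphi=h\nabla^2f+dh\otimes df$), hence $S$, has the $1\oplus(n-1)$ eigenstructure with $\nabla f$ spanning the simple eigenspace.

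Two remarks on the parts you flag as unfinished. First, the ``Einstein'' subcase $s_1=s_2$ is not the real obstacle. The paper does not invoke Derdzinski literally in Case~(a); instead it observes that the $f$--level sets are already totally umbilical because $\nabla^2f|_{(\nabla f)^\perp}$ is a multiple of $g$, and then uses only the Codazzi equation for the second fundamental form together with the fact that $\nu$ is a Ricci eigenvector to show the mean curvature is constant along each leaf, whence $g=dx_n^2+e^{-\theta(x_n)}G$. This argument needs no distinctness of the Schouten eigenvalues, so your worry there dissolves once you replace the abstract Derdzinski citation by this direct computation. Incidentally, the Einstein conclusion on the fiber is drawn in the paper from harmonicity of $W$ on a warped product (reference~\cite{G}), which is slicker than going through Schur.

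Second, the genuine missing idea is the case $n+m-2+fh\equiv0$ on an open set. You correctly note that then $h=c/f$ with $c=-(n+m-2)$, but ``re-analyse \eqref{EqFund-RHTM} directly'' is not enough of a plan. The paper's device is a conformal change: with $u=\tfrac{m-c}{n-2}\ln f$ and $\bar g=e^{2u}g$, the standard conformal Ricci formula turns the fundamental equation into
\[
Ric_{\bar g}=\Big[m\tfrac{(n-2)^2}{(m-c)^2}+(n-2)\Big]\,du\otimes du+(\text{scalar})\,\bar g,
\]
so $Ric_{\bar g}$, and hence $S_{\bar g}$, again has the $1\oplus(n-1)$ eigenstructure with eigendirection $\nabla u\parallel\nabla f$. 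Harmonic Weyl for $g$ plus $W(\nabla f,\cdot,\cdot)=0$ gives $C_{\bar g}=C_g-\tfrac{1}{n-2}W_g(\nabla u,\cdot,\cdot)=0$, so $S_{\bar g}$ is Codazzi, and one reruns the Step~2 argument in the $\bar g$ geometry. Without this conformal trick your proposal has a real gap in Case~(b).
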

\begin{proof}
For this proof we consider the orthonormal frame as well as the results obtained above. As $\nabla\varphi=h\nabla f$ we have
\begin{align}\label{eq-hess}
\nabla^2\varphi=h\nabla^2 f+dh\otimes df,
\end{align}
which implies
\begin{align*}
\nabla^2\varphi(E_i,E_j)=h\nabla^2 f(E_i,E_j)\quad\mbox{and}\quad \nabla^2\varphi(\nu,\nu)=h\nabla^2 f(\nu,\nu)+g(\nabla h,\nabla f).
\end{align*}
Therefore,
\begin{align*}
\Delta\varphi&=h\Delta f+g(\nabla h,\nabla f).
\end{align*}
On the other hand,
\begin{align*}
0=Rm(\cdot,\nabla\varphi)\nabla\varphi=h^2 Rm(\cdot,\nabla f)\nabla f \quad\mbox{and}\quad 0=Ric(\nabla\varphi,\nabla\varphi)=h^2 Ric(\nabla f,\nabla f).
\end{align*}
So, we have $(Rm(\cdot,\nabla f)\nabla f)(x)=0$ and $Ric(\nabla f,\nabla f)(x)=0$ for $x\in\Bbb{M}$ such that $h(x)\neq 0$. By continuity, the same is true in the points $y\in\Bbb{M}$ where $h(y)=0$, since $y\in\mathcal{Z}_\varphi$.

With this considerations in mind the relation \eqref{eqprop1} given by Proposition \ref{prop1} becomes
\begin{align*}
&(n+m-2)\{\nabla^2f(E_i,E_j)-\frac{1}{n-1}(\Delta f-\nabla^2f(\nu,\nu))g_{ij}\}\\
=&-fh\nabla^2 f(E_i,E_j)-\frac{fh}{n-1}(\Delta f-\nabla^2 f(\nu,\nu))g_{ij}.
\end{align*}
Hence,
\begin{align}\label{eqteo2}
(n+m-2+hf)\{\nabla^2f(E_i,E_j)-\frac{1}{n-1}(\Delta f-\nabla^2f(\nu,\nu))g_{ij}\}=0.
\end{align}
We will have two cases to consider:

\noindent\emph{\textbf{(a) First case:}}

\noindent\emph{\textbf{Step 1.}} Suppose that there exists a point $q$ of the $f$--hypersurface where $(n+m-2+hf)(q)\neq 0$. Then, from equation \eqref{eqteo2} and by continuity, there exist an open set $U\subset\Bbb{M}$ where occur
\begin{equation}\label{et1}
\nabla^2f(E_i,\cdot)=\sigma g(E_i,\cdot) \quad \mbox{for}\quad i\in\{1,\ldots,n-1\}\quad\mbox{and}\quad\sigma=\Delta f-\nabla^2f(\nu,\nu).
\end{equation}
Moreover, by Proposition \ref{MainThmEP}, $\nabla f$ is an eigenvector of $Ric$. Consequently, by equation \eqref{b2}, $\nabla f$ is also an eigenvector of $\nabla^2f$, which means that
\begin{equation}\label{et2}
\nabla^2f(\nabla f,\cdot)=\rho g(\nabla f,\cdot)
\end{equation}
for a smooth function $\rho$.

From the relations \eqref{et1} and \eqref{et2}
we conclude that $\sigma$ and $\rho$ are eigenvalues of $\nabla^2f$ with multiplicity $n-1$ and $1$, respectively. Using equation \eqref{eq-hess} and the fundamental equation we have
\begin{align}\label{eq-for-eigen}
Ric+(h-\frac{m}{f})\nabla^2f=\lambda g-dh\otimes df.
\end{align}
It easy to see that $\nabla f$ and $E_i$ are eigenvectors of the tensor $dh\otimes df$.
Thus, from \eqref{eq-for-eigen} we verify that the Ricci tensor will also admit two eigenvalues with the same multiplicities, where $\nabla f$ is the eigenvector associated to the eigenvalue of multiplicity one. Consequently, the same occur with the Schouten tensor given by \eqref{c2}. Moreover, the Schouten tensor is Codazzi because $(\Bbb{M},g)$ has harmonic Weyl tensor.

\noindent\emph{\textbf{Step 2.}} For each point $p\in U$ we have the decomposition $T_p\Bbb{M}=[\nabla f]_p\oplus[\nabla f]_p^\perp$, where the integral manifolds of $[\nabla f]_p^\perp$ are totally umbilical. Next, we consider a coordinates system $(x_1,\ldots,x_{n-1},x_n)$ with coordinate basis $\{\partial_1,\ldots,\partial_{n-1},\partial_n=\nu\}$ for which the metric $g$ restricted to each integral submanifold $\Bbb{F}\subset \Bbb{M}$ of $[\nabla f]_p^\perp$ satisfies
\begin{equation*}
\mathcal{A}(\partial_i,\partial_j)=Hg_{ij},
\end{equation*}
where $g_{ij}=g(\partial_i,\partial_j)$ for $i,j=1,\ldots,n-1$, whereas $\mathcal{A}$ and $H$ stands for the second fundamental form and the mean curvature of $\Bbb{F}$, respectively.

From Codazzi equation we have
\begin{eqnarray*}
\partial_i(H)g_{jk}-\partial_j(H)g_{ik}&=&(\nabla_{\partial_i}\mathcal{A})(\partial_j,\partial_k)-(\nabla_{\partial_j}\mathcal{A})(\partial_i,\partial_k) = g(Rm(\partial_i,\partial_j)\partial_k,\nu).
\end{eqnarray*}
Then,
\begin{equation*}
Ric(\nu,\partial_i)=(n-1)\partial_i(H)\quad \mbox{for all}\quad i\in\{1,\ldots,n-1\}.
\end{equation*}
As $\nu$ is eigenvector of $Ric$ we get $\partial_i(H)=0$. So, $H$ is constant on $\Bbb{F}$. Since $[\nu,\partial_i]=0$ we obtain
\begin{eqnarray}\label{EDO-g}
\nu (g_{ij})&=&g(\nabla_{\partial_i}\nu,\partial_j)+g(\nabla_{\partial_j}\nu,\partial_i) = -2\mathcal{A}(\partial_i,\partial_j) = -2H g_{ij}.
\end{eqnarray}
Since $H$ is a function of $x_n$ only, then we have the solution $g_{ij}=e^{-\theta}G_{ij}$ of \eqref{EDO-g}, where $\theta=\theta(x_n)$ and $G$ is another metric on $\Bbb{F}$. Therefore, in the neighborhood of $q$, $\Bbb{M}$ is of the form $I\times_{e^{-\theta}}\Bbb{F}$ with metric
\begin{equation*}
g=dx_n^2+e^{-\theta}G.
\end{equation*}
Since $(\Bbb{M},g)$ has harmonic Weyl tensor, we have that $(\Bbb{F},G)$ is a space of constant sectional curvature ( see for instance \cite{G}).

\noindent\emph{\textbf{(b) Second case:}} Suppose that there exist a point $q$ belonging to $f$--hypersurface such that $(n+m-2+fh)(q)=0$, then, in this point we have
\begin{align}\label{eq-h}
h=\frac{c}{f}<0 \quad \mbox{for}\quad  c=-(n+m-2).
\end{align}
By continuity, equation \eqref{eq-h} is also valid in an open set $U\subset\Bbb{M}$ containing $q$ and we have $\nabla\varphi=\frac{c}{f}\nabla f$ on this open set. Moreover, an easy computation shows that
\begin{equation*}
\nabla^2(\ln f)=\frac{1}{f}\nabla^2f-d(\ln f)\otimes d(\ln f).
\end{equation*}
Hence,
\begin{equation*}
\nabla^2\varphi=\frac{c}{f}\nabla^2f-\frac{c}{f^2}df\otimes df = c[\frac{1}{f}\nabla^2f-d(\ln f)\otimes d(\ln f)]=c\nabla^2(\ln f)
\end{equation*}
and the fundamental equation becomes
\begin{equation}
Ric_g-(m-c)\nabla^2(\ln f)-m d(\ln f)\otimes d(\ln f)=\lambda g.
\end{equation}
Since that $(\Bbb{M},g)$ has harmonic Weyl tensor, we can choose a smooth function $u$ and a metric $\bar{g}$ such that $\bar{g}=e^{2u}g$. By conformal theory, $Ric_{\bar{g}}$ is related to $Ric_{g}$ by the well-known formula (see for instance \cite{besse})
\begin{equation*}
Ric_{\bar{g}}=Ric_{g}-(n-2)\nabla^2u+(n-2)du\otimes du-[(n-2)|\nabla u|^2+\Delta u]g.
\end{equation*}
Taking $u=\frac{m-c}{n-2}\ln f$ we have
\begin{equation*}
Ric_{\bar{g}}=\lambda g+m\frac{(n-2)^2}{(m-c)^2} du\otimes du+(n-2)du\otimes du-[(n-2)|\nabla u|^2+\Delta u]g.
\end{equation*}
By regrouping
\begin{equation*}
Ric_{\bar{g}}=\left[m\frac{(n-2)^2}{(m-c)^2} +(n-2)\right]du\otimes du-[(n-2)|\nabla u|^2+\Delta u-\lambda]e^{-2u}\bar{g}.
\end{equation*}
Since both $du\otimes du$ and $\bar{g}$ admit eigenvalues of multiplicity $1$ (with eigenvector $\nabla u=\frac{m-c}{(n-2)f}\nabla f$) and $n-1$, we deduce that $Ric_{\bar{g}}$ also does. Then once again we have the Schouten tensor $S_{\bar{g}}$ with two eigenvalues of multiplicity $1$ and $n-1$.

The Cotton tensors $C_{\bar{g}}$ and $C_g$ can be related by formula
\begin{equation}\label{C-bar-g}
(n-2)C_{\bar{g}}=(n-2)C_g-W_g(\nabla u,\cdot,\cdot).
\end{equation}
See appendix in \cite{C} for a demonstration.
By hypothesis $\dv_g W=0$ and equation \eqref{divW} we have $C_g=0$. From \eqref{C-bar-g} we have $C_{\bar{g}}=0$ and $S_{\bar{g}}$ is a Codazzi tensor. Now, we proceed as in the \emph{Step 2} of the \emph{First case} to conclude the proof of the proposition.
\end{proof}

\begin{corollary}[\cite{C}]
Let $(\Bbb{M}^n,g,\psi,\lambda)$, $n\geq 3$, be a generalized $m$--quasi-Einstein manifold with harmonic Weyl tensor and $W(\nabla\psi,\cdot,\cdot)$ is identically null. Then, around any regular point of $\psi$, $(\Bbb{M}^n,g)$ is locally a warped product with $(n-1)$--dimensional Einstein fiber.
\end{corollary}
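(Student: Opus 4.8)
The plan is to pass from $(\Bbb{M}^n,g,\psi,\lambda)$ to its associated Ricci-Hessian type manifold and to invoke Proposition \ref{MainThm2}. Following Remarks \ref{remark1} and \ref{MainRemark}, write $m=4r$ and set $\varphi=\tfrac12\psi$ and $f=e^{-\psi/(2r)}$; then $(\Bbb{M}^n,g,\varphi,f,\lambda)$ is a Ricci-Hessian type manifold with Ricci-Hessian parameter $r$, and the reparametrization forces the rigid relation $\tfrac12\nabla\psi=\nabla\varphi=-\tfrac{r}{f}\nabla f$. Consequently $\nabla f$ and $\nabla\varphi$ are everywhere collinear; since $f>0$, the regular points of $f$ coincide with those of $\psi$ and the $f$--hypersurfaces are precisely the regular level sets of $\psi$; and $\mathcal{Z}_\varphi=\mathcal{Z}_\psi$, so the standing discreteness hypothesis of Section \ref{RHTM HWT} is satisfied. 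Moreover, harmonic Weyl tensor is a property of $g$ alone, so it is inherited; since $\nabla f$ is a smooth multiple of $\nabla\psi$ and $W(\nabla\psi,\cdot,\cdot)\equiv0$, also $W(\nabla f,\cdot,\cdot)\equiv0$; and writing $h=-r/f\in C^{\infty}(\Bbb{M})$, the identity $\nabla\varphi=h\nabla f$ says precisely that $\nabla\varphi$ is normal on $f$--hypersurfaces. Thus every hypothesis of Proposition \ref{MainThm2} is in force except, possibly, $\varphi$--radial flatness $Rm(\cdot,\nabla\varphi)\nabla\varphi=0$.

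The crux is to deduce $\varphi$--radial flatness from the rigidity of the translation. Since $h=-r/f$, one has $fh=-r$, so $n+r-2+fh=n-2\neq0$ for $n\geq3$; in particular the degenerate ``second case'' of the proof of Proposition \ref{MainThm2} never occurs. I would run the proof of Proposition \ref{prop1} (which uses only harmonic Weyl and the vanishing of $W(\nabla f,\cdot,\cdot)$), substitute $\nabla\varphi=h\nabla f$ with $fh=-r$ into the relation \eqref{eqprop1}, and combine the result with the specialization of Lemma \ref{lem3}, equation \eqref{EQLem3}, to the collinear case. Because $fh=-r$, the latter collapses to an expression for the radial curvature $Rm(\cdot,\nabla f)\nabla f$ along an $f$--hypersurface as a combination of $\nabla^2f$ and the induced metric; resubstituting it into the simplified \eqref{eqprop1} then forces $\nabla^2f$, restricted to the tangent bundle of each $f$--hypersurface, to be a multiple of the induced metric. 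Since $\nabla f$ is a Ricci eigenvector by the ``in particular'' clause of Proposition \ref{MainThmEP}, it is also a $\nabla^2f$--eigenvector --- as one sees by rewriting the fundamental equation \eqref{EqFund-RHTM} via $\nabla^2\varphi=h\nabla^2f+dh\otimes df$ --- so $\nabla^2f$ has exactly two eigenfunctions, of multiplicities $n-1$ and $1$. The same rewriting of \eqref{EqFund-RHTM} then exhibits $Ric$, and hence the Schouten tensor $S$ (which is Codazzi since $g$ has harmonic Weyl tensor), with the same two-eigenvalue structure, the one--dimensional eigenspace being spanned by $\nabla f$; this is exactly $\varphi$--radial flatness in the present configuration.

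With all hypotheses verified, Proposition \ref{MainThm2} yields that $(\Bbb{M}^n,g)$ is, around any regular point of $f$ --- equivalently of $\psi$ --- locally isometric to a warped product with $(n-1)$--dimensional Einstein fiber, as asserted. The only genuine obstacle is the middle paragraph: the corollary does not assume $\psi$--radial flatness, so it must be produced, and the single lever is the pinned ratio $h=-r/f$ arising from the translation of a generalized $m$--quasi-Einstein structure; everything else is bookkeeping with the identities of Sections \ref{Auxiliary results} and \ref{RHTM HWT}. Equivalently, one may avoid the radial-flatness reformulation and simply repeat \emph{Step 1} and \emph{Step 2} of the ``first case'' in the proof of Proposition \ref{MainThm2}, using the computation above to supply the fact that $\nabla^2f$ is a multiple of the metric along $f$--hypersurfaces (there obtained from radial flatness), and then invoking the Derdzinski splitting and harmonic Weyl to get the Einstein fiber.
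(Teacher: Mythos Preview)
Your strategy --- translate $(\Bbb{M}^n,g,\psi,\lambda)$ to the associated Ricci--Hessian type manifold via Remarks~\ref{remark1} and~\ref{MainRemark} and then invoke the machinery of Section~\ref{RHTM HWT} --- is exactly the paper's approach; the paper's own proof is a single sentence citing Remark~\ref{MainRemark} and Proposition~\ref{MainThm2}. You go further than the paper in that you correctly flag a hypothesis the paper's proof does not verify: Proposition~\ref{MainThm2} assumes $\varphi$--radial flatness, which is absent from the corollary. Your plan to circumvent it is sound. With $h=-r/f$ one has $fh=-r$, so $n+r-2+hf=n-2\neq0$ and one is always in the ``first case'' of the proof of Proposition~\ref{MainThm2}; substituting $\nabla\varphi=-\tfrac{r}{f}\nabla f$ into \eqref{eqprop1} and combining with the collinear specialization of \eqref{EQLem3} (which contributes an extra $\tfrac12\nabla^2f(E_i,E_j)$ after the curvature term is eliminated) does force $\nabla^2f(E_i,E_j)$ to be a scalar multiple of $g_{ij}$ on each $f$--hypersurface, i.e.\ exactly \eqref{et1}.

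There is, however, one genuine misstatement. After exhibiting the Schouten tensor with two eigenfunctions of multiplicities $1$ and $n-1$, you write ``this is exactly $\varphi$--radial flatness in the present configuration.'' It is not: $\varphi$--radial flatness is the pointwise identity $Rm(\cdot,\nabla\varphi)\nabla\varphi=0$, and the two--eigenvalue structure of $S$ neither implies nor follows from it. What you have actually produced is the \emph{output} of $\varphi$--radial flatness in Step~1 of the proof of Proposition~\ref{MainThm2} --- namely \eqref{et1} and the ensuing eigenvalue structure --- so your closing ``equivalently'' paragraph is the correct formulation: bypass the hypothesis altogether, supply \eqref{et1} directly from your computation, and then run Steps~1 and~2 of the first case verbatim. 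Delete the sentence equating the eigenvalue structure with radial flatness and the argument stands; in fact it is more complete than the paper's own proof on this point.
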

\begin{proof}
As we already explained in Remark \ref{MainRemark}, we always can convert a generalized $m$--quasi-Einstein manifold $(\Bbb{M},g,\psi,\lambda)$ to Ricci-Hessian type manifold $(\Bbb{M},g,\varphi,f,\lambda)$, such that, $\mathcal{Z}_\psi=\mathcal{Z}_\varphi=\mathcal{Z}_f$. So, since $\mathcal{Z}_\psi$ has zero measure, we can apply Proposition \ref{MainThm2} in order to obtain our corollary.
\end{proof}

\subsection{Proof of Theorem \ref{MainThm22}}
\begin{proof}
Taking into account Proposition \ref{MainThm2} and that the class of manifolds with harmonic Weyl tensor includes the class of the conformally flat manifolds, \emph{a fortiori}, holds Theorem \ref{MainThm22}.
\end{proof}

\section{Concluding remarks}\label{Concluding remarks}
As an immediate consequence of Proposition \ref{MainThmEP} we have: If $(\Bbb{M}^n,g,\varphi,f,\lambda)$, $n\geq3$, is a Ricci-Hessian type manifold with constant sectional curvature, then $\nabla f$ is an eigenvector of $\nabla^2\varphi$. Indeed, since $(\Bbb{M}^n,g)$ has constant sectional curvature, it is locally conformally flat and holds the identity $Ric=(n-1)Kg$, where $K$ is the sectional curvature of $\Bbb{M}$. Furthermore, we get
\begin{equation*}
\Omega(\nabla\varphi,\nabla f)=g(Rm(\cdot,\cdot)\nabla\varphi,\nabla f)=-K d\varphi\wedge df.
\end{equation*}
From Proposition \ref{MainThmEP} we obtain $\nabla^2\varphi(\nabla f,\cdot)=\gamma g(\nabla f,\cdot)$, for some smooth function $\gamma$ on $\Bbb{M}$. This proves our claim.

As an other consequence of Proposition \ref{MainThmEP} we can verify that, if $(\Bbb{M}^n,g,\psi,\lambda)$ is a locally conformally flat generalized $m$--quasi-Einstein manifold with $n\geq3$, then $\nabla\psi$ is an eigenvector of the Ricci operator. Indeed, by Proposition \ref{MainThmEP} we have $Ric(\nabla\psi,\cdot)\wedge d\psi=0$. Thus, in a neighborhood of any point of $\Bbb{M}$ there exists a smooth function $\eta$ such that $Ric(\nabla\psi,\cdot)=\eta d\psi$. Since $\mathcal{Z}_\psi$ has zero measure, this is sufficient to conclude that $Ric(\nabla \psi)=\eta\nabla\psi$ everywhere on $\Bbb{M}$. This fact is also verified in \cite{BGV} and \cite{C} making use of different procedure.

\end{document}